\newtheorem{theorem}{Theorem}[section]
\newtheorem{lemma}[theorem]{Lemma}
\theoremstyle{definition}
\newtheorem{definition}[theorem]{Definition}
\newtheorem{remark}{Remark}
\def\dref#1{(\ref{#1})}
\title[Regional gradient controllability of ultra-slow diffusions ...] 
      {Regional gradient controllability of ultra-slow diffusions involving the Hadamard-Caputo time fractional derivative}
\author[Ruiyang Cai, Fudong Ge, YangQuan Chen and Chunhai Kou]{}
\subjclass{Primary: 93B05, 35R11; Secondary: 60J60.}
 \keywords{Ultra-slow diffusion processes, regional gradient controllability, Hadamard-Caputo time fractional derivative, Hilbert Uniqueness Method.}
 \email{2151359@mail.dhu.edu.cn}
 \email{gefd@cug.edu.cn}
 \email{ychen53@ucmerced.edu}
 \email{kouchunhai@dhu.edu.cn}
\thanks{The second author is supported by the Fundamental Research Funds for the Central Universities, China University of Geosciences, Wuhan (No.CUG170627), the Natural Science Foundation of China (NSFC,  No.KZ18W30084) and the Hubei NSFC (No.2017CFB279).}
\thanks{$^*$ Corresponding author.}
\begin{document}

\maketitle

\centerline{\scshape Ruiyang Cai}
\medskip
{\footnotesize
 \centerline{College of Information Science and Technology, Donghua University}
  \centerline{Shanghai 201620,  China}
} 

\medskip

\centerline{\scshape Fudong Ge$^*$}
\medskip
{\footnotesize
 \centerline{School of Computer Science, China University of Geosciences}
  \centerline{Wuhan 430074,  China}
} 

\medskip
\centerline{\scshape YangQuan Chen}
\medskip
{\footnotesize
 \centerline{School of Engineering (MESA-Lab), University of California}
  \centerline{Merced, CA 95343, USA}
}
\medskip
\centerline{\scshape Chunhai Kou}
\medskip
{\footnotesize
 \centerline{Department of Applied Mathematics, Donghua University}
  \centerline{Shanghai 201620, China}
}

\bigskip

 \centerline{(Communicated by Qi L\"u)}

\begin{abstract}
This paper investigates the regional gradient controllability for ultra-slow diffusion processes governed by the time fractional diffusion systems with a Hadamard-Caputo time fractional derivative. Some necessary and sufficient conditions on regional gradient exact and approximate controllability are first given and proved in detail. Secondly, we propose an approach on how to calculate the minimum number of $\omega-$strategic actuators. Moreover, the existence, uniqueness and the concrete form of the optimal controller for the system under consideration are presented by employing the Hilbert Uniqueness Method (HUM) among all the admissible ones. Finally, we illustrate our results by an interesting example.
\end{abstract}

\section{Introduction}

\label{sec:int}

In recent years, fractional differential equations have attracted increasing interests and a wide number of monographs have been published both in its basic theory and applications (see e.g. the monographs \cite{A6,A3,A2, A1} and the references cited therein). In particular, the introduction of the continuous time random walk (CTRW) theory, which is closely related to the anomalous diffusion \cite{A7,A9}, sets off a new wave in the research on fractional order systems, whose unique ¡°power-law¡± property efficiently describes the characteristics of the anomalous diffusion.
Nevertheless, most of these studies on anomalous diffusion are involved in Riemann-Liouville and Caputo fractional derivatives, while the Hadamard fractional derivative, which was introduced in 1892 \cite{A10}, has not been researched so much as the previous two even though it has some special features. The main differences between the Hadamard fractional derivative and the above two common kinds of fractional derivative focus on two aspects. On one hand, the Hadamard fractional derivative involves an integral kernel of logarithmic function with an arbitrary exponent, which could be more effective in describing the ultra-slow diffusion processes. On the other hand, the $t\frac{d}{dt}$ in its definition has shown to be invariant on the half-axis in concerns of dilation \cite{A11}. For more knowledge of the Hadamard fractional integral and derivative, we refer the reader to \cite{A12,A16}.

The applications of the Hadamard fractional derivative are also abundant in real world. For example, fractional thermoelasticity \cite{A44}, kinetic theory of gases \cite{A43} and physical phenomena in fluctuating environments \cite{A45}, which are special cases of ultra-slow diffusion processes. The Hadamard fractional derivative can well describe these situations because of the ultra-slow increasing rate of $\log t$ compared with $t$ $(t>0)$, while both Riemann-Liouville and Caputo fractional derivatives cannot meet these needs.

Nowadays, several theoretical results on Hadamard fractional differential equations have been obtained. Two surveys \cite{A17,A18} proved the existence of solutions and weak solutions for some classes of Hadamard type fractional differential equations with and without impulse effects and presented the analytical solutions in terms of the Mittag-Leffler function. The authors in \cite{A19} investigated the initial and boundary value problems of Hadamard fractional differential equations and inclusions, and obtained some new results on them. The asymptotic behavior of solutions of nonlinear Caputo-type Hadamard fractional differential equations was established in \cite{A20}. More contributions in this field can be found in \cite{A22,A21,A25}. However, the studies on the controllability theory and applications of Hadamard time fractional diffusion systems are still on their early stages and need further investigation, even though there are a great many works on the controllability of diffusion systems \cite{A49,A50,A47}, and fractional systems \cite{A51,A52}.

Notice that in practical applications not all the diffusion systems can be controlled on the whole domain. Hence, regional controllability should be considered, where we concern the systems under consideration only within some subregions of the whole domain \cite{A26,A6}.
We claim, in addition, that the concept of regional gradient controllability makes sense in many real-life applications. One of the most important applications is in forest fire. Other than put out the fire directly, fire fighters usually try to control its spread, namely the gradient of fire intensity. Other applications can be found in thermic isolation problem, industrial ceramics, etc.
By considering the characteristics of actuators, our goal here is to study the regional gradient controllability of the Hadamard time fractional diffusion systems, which is suitable for a much wider range of physical phenomena. This is especially appealing for these systems which are not gradient controllable on the whole domain. Furthermore, it is worth mentioning that a number of remarkable new results have been derived by Zerrik et al. \cite{A27,A28,A30} on the regional gradient controllability of integer order diffusion systems. Nowadays, the authors in \cite{A31,A32,A33} have extended to study the regional (gradient) controllability of time fractional diffusion systems with the Riemann-Liouville and the Caputo fractional derivatives. Besides, we note that the initial condition of the Caputo fractional differential equation is the same as that of integer order, which has a more intuitive physical interpretation and easier to realize in engineering. With these, in this paper, we will adopt the Caputo type modification of the Hadamard fractional derivative, the so called Hadamard-Caputo fractional derivative, introduced in \cite{A21}, to establish some criteria for the regional gradient controllability of Hadamard-Caputo time fractional diffusion systems.

Motivated by the arguments above, herein, we consider the following time fractional diffusion system with a Hadamard-Caputo time fractional derivative:
\begin{equation}
\label{system}
\left\{
\begin{aligned}
\overset{}\, &
^{HC}_{a}D^{\alpha}_{t}z(x,t)=Az(x,t)+Bu(t)\mbox{ in }\Upsilon,\\
            & z(x,a)=z_{0}(x)\mbox{ in }\Omega,\\
            & z(\xi,t)=0\mbox{ on }\partial\Omega\times[a,b],\\
\end{aligned}
\right.
\end{equation}
where $\Upsilon=\Omega\times[a,b]$, $a>0$, $0<\alpha<1$, $\Omega$ is a bounded open set of $\mathbb{R}^{n}$ with smooth boundary $\partial\Omega$ and $z(x,\cdot)\in AC[a,b]\triangleq\{z(x,\cdot):\,[a,b]\rightarrow \mathbb{R}$ and $z(x,\cdot)$ is absolutely continuous in $[a,b]\}$.
Here, $A$ is the infinitesimal generator of a $C_{0}-$semigroup $\{T(t)\}$ on $L^{2}(\Omega)$ and $-A$ is a uniformly elliptic operator.
Besides, $z_{0}\in H^{1}_{0}(\Omega)$, $u$: $[a,b]\rightarrow \mathbb{R}^{m}$ and $B:\mathbb{R}^{m}\rightarrow H^{1}_{0}(\Omega)$ is a bounded linear operator.
In addition, $^{HC}_{a}D^{\alpha}_{t}$ denotes the Hadamard-Caputo time fractional derivative to be specified later.
To the best of our knowledge, no results are available on this topic and we hope that the obtained results could provide some insights into the control theory of Hadamard time fractional diffusion systems.

The rest of this paper is organized as follows. Some needed definitions and lemmas are presented in the next section. In Section 3, we give our main results, where the necessary and sufficient conditions on regional gradient exact and approximate controllability for the system \dref{system} are first explored and then we discuss the existence, uniqueness and the concrete form of the optimal controller for the system under consideration. An example is finally worked out to confirm the effectiveness of our results.

\section{Preliminary results}
  \label{sec:pre}
In this section, we recall some basic definitions and lemmas to be applied throughout this paper.

\begin{definition}\cite{A21}
\label{H}
The left-sided and right-sided Hadamard fractional integral of order $\alpha\in \mathbb{C}$, $Re(\alpha)>0$ of a function $f(t)$ are respectively, defined by
\begin{equation}
\label{def}
    _{a}^{H}I_{t}^{\alpha}f(t)\triangleq\frac{1}{\Gamma(\alpha)}\int_a^t\left(\log\frac{t}{s}\right)^{\alpha-1}f(s)\frac{ds}{s}
\end{equation}
and
\begin{equation}
    ^{H}_{t}I_{b}^{\alpha}f(t)\triangleq\frac{1}{\Gamma(\alpha)}\int_t^b\left(\log\frac{s}{t}\right)^{\alpha-1}f(s)\frac{ds}{s},
\end{equation}
where $Re(\alpha)$ denotes the real part of $\alpha$.
\end{definition}

\begin{definition}\cite{A21}
Let $Re(\alpha)\geq0$ and $n=\left[Re(\alpha)\right]+1$. If $f(t)\in AC^{n}_{\delta}[a,b]$, where $0<a<b<\infty$ and $AC^{n}_{\delta}[a,b]=\big\{f(t):[a,b]\rightarrow \mathbb{C}\mid \delta^{n-1}f(t)\in AC[a,b],\,\,\delta=t\frac{d}{dt}\big\}$.
Define the left-sided and right-sided Hadamard-Caputo fractional derivatives respectively, as follows:
\begin{equation}
    _{a}^{HC}D_{t}^{\alpha}f(t)\triangleq^{H}_{a}D_{t}^{\alpha}\left[f(t)-\sum^{n-1}_{k=0}\frac{\delta^{k}f(a)}{k!}\left(\log\frac{t}{a}\right)^{k}\right]
\end{equation}
and
\begin{equation}
    _{t}^{HC}D_{b}^{\alpha}f(t)\triangleq^{H}_{t}D_{b}^{\alpha}\left[f(t)-\sum^{n-1}_{k=0}\frac{(-\delta)^{k}f(b)}{k!}\left(\log\frac{b}{t}\right)^{k}\right],
\end{equation}
where $^{H}_{a}D^{\alpha}_{t}f(t)\,\,and\,\,^{H}_{t}D^{\alpha}_{b}f(t)$ are the left-sided and right-sided Hadamard fractional derivative, defined by
\begin{equation}
\label{HD}
^{H}_{a}D^{\alpha}_{t}f(t)\triangleq\frac{1}{\Gamma(n-\alpha)}\,\delta^{n}\int_a^t\left(\log\frac{t}{s}\right)^{n-\alpha-1}f(s)\frac{ds}{s}
\end{equation}
and
\begin{equation}
^{H}_{t}D^{\alpha}_{b}f(t)\triangleq\frac{(-1)^{n}}{\Gamma(n-\alpha)}\,\delta^{n}\int_t^b\left(\log\frac{s}{t}\right)^{n-\alpha-1}f(s)\frac{ds}{s},
\end{equation}
respectively.
In particular, if $0<Re(\alpha)<1$ and $f(t)\in AC[a,b]$, where $0<a<b<\infty$. Then $_{a}^{HC}D_{t}^{\alpha}f(t)$ exists everywhere on $[a,b]$ and can be presented by
\begin{equation}
_{a}^{HC}D_{t}^{\alpha}f(t)=\frac{1}{\Gamma(1-\alpha)}\int_a^t\left(\log\frac{t}{s}\right)^{-\alpha}f'(s)ds.
\end{equation}
\end{definition}

Let $\nabla: H^{1}_{0}(\Omega)\rightarrow \left(L^{2}(\Omega)\right)^{n}$ be the gradient operator defined by
\begin{equation*}
\nabla z\triangleq \left(\frac{\partial z}{\partial x_{1}},\ldots,\frac{\partial z}{\partial x_{n}}\right).
\end{equation*}
According to the Eq.(2) in \cite{A30}, then $\nabla^{*}:\left(L^{2}(\Omega)\right)^{n}\rightarrow H^{-1}(\Omega)$, $z\mapsto h$, the adjoint operator of $\nabla$, can be given by the unique solution of
\begin{equation}
\left\{
\begin{aligned}
\overset{}\,
& \Delta h=-div\,z\,\,\,\,\,\,\,\,in\,\,\Omega,\\
& \,h=0\,\,\,\,\,\,\,\,\,\,\,\,\,\,\,\,\,\,\,\,\,\,\,\,\,\,on\,\,\partial\Omega.\\
\end{aligned}
\right.
\end{equation}
With this, we give the following definitions.

\begin{definition}\cite{A28}
\label{2.3}
System \dref{system} is said to be regionally gradient exactly (approximately) controllable on $\omega$ at time $b$, if for every $f(x)\in \left(L^{2}(\omega)\right)^{n}$ and any $\varepsilon>0$, there exists a $u\in L^{2}\left([a,b],\,\mathbb{R}^{m}\right)$ such that
\begin{equation}
p_{\omega}\nabla z_{u}(x,b)=f(x)
\left(\parallel p_{\omega}\nabla z_{u}(x,b)-f(x)\parallel_{\left(L^{2}(\omega)\right)^{n}} <\varepsilon\right),
\end{equation}
where $p_{\omega}$ is the restriction map from $\Omega$ to its subset $\omega$.
\end{definition}

\begin{remark}
In Definition \ref{2.3}, if we choose $\omega=\Omega$, then the definition of regional controllability coincides with that of the classical controllability.
\end{remark}

\begin{definition}\cite{A27}
\label{2.4}
The actuator (actuators) is (are) said to be gradient $\omega$-strategic if system \dref{system} is regionally gradient approximately controllable on $\omega$ at time $b$.
\end{definition}

To discuss the regional controllability problem of system \dref{system}, the following generalized Mittag-Leffler function of two parameters \cite{A3} is needed:
\begin{eqnarray*}
E_{\alpha,\beta}(z)\triangleq \sum^{\infty}_{n=0}\frac{z^{n}}{\Gamma\left(n\alpha+\beta\right)},
\end{eqnarray*}
where $z\in \mathbb{C}$, $Re(\alpha)>0$. We see that when $\beta=1$, $E_{\alpha,\,\beta}(z)=E_{\alpha}(z)$, the Mittag-Leffler function in one parameter. For $\alpha=1$, we have $E_{1}(z)=e^{z}$, which is the exponential function.

Based on the Theorem 3.6 and 3.8 in \cite{A18}, we present two important lemmas as follows.

\begin{lemma}
\label{2.5}
Let $0<\alpha<1$. If there exists constants $k>-\alpha$, $l\leq0$ with $l>\max\left\{-\alpha,-\alpha-k\right\}$ and $M\geq0$ such that $\parallel Bu(t)\parallel\leq Mt^{k}(1-t)^{l}$ for all $t\in(a,b)$. Then system \dref{system} has a unique solution
\begin{equation*}
\begin{aligned}
z(x,t)=
S_{\alpha}\left(\log\frac{t}{a}\right)z_{0}(x)+\int_a^t\left(\log\frac{t}{s}\right)^{\alpha-1}K_{\alpha}\left(\log\frac{t}{s}\right)Bu(s)\frac{ds}{s},\\
\end{aligned}
\end{equation*}
where
\begin{equation*}
S_{\alpha}(t)=\int_0^{\infty} \phi_{\alpha}(s)T\left(t^{\alpha}s\right)ds
\end{equation*}
and
\begin{equation*}
K_{\alpha}(t)=\alpha\int_0^{\infty} s\phi_{\alpha}(s)T\left(t^{\alpha}s\right)ds.
\end{equation*}
Here, $\phi_{\alpha}(s)=\frac{s^{-\frac{1}{\alpha}-1}}{\alpha}\eta_{\alpha}\left(s^{-\frac{1}{\alpha}}\right)$ and $\eta_{\alpha}(s)$ is defined by
\begin{equation*}
\eta_{\alpha}(s)=-\sum_{n=1}^{\infty}\frac{\Gamma(n\alpha+1)}{n!\pi s}\left(\frac{-1}{s^{\alpha}}\right)^{n}\sin n\pi\alpha,s\in (0,\infty).
\end{equation*}
\end{lemma}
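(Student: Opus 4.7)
The plan is to reduce the Hadamard--Caputo evolution equation to a standard Caputo evolution equation by the logarithmic time-change $\tau=\log(t/a)$, and then invoke the mild-solution representation of [A18] (Theorems 3.6, 3.8) already referenced in the statement. The key algebraic observation is that the operator $\delta=t\frac{d}{dt}$ becomes $\frac{d}{d\tau}$ under this substitution, so the integral kernel $\bigl(\log(t/s)\bigr)^{-\alpha}$ in the Hadamard--Caputo derivative transforms directly into the Caputo kernel $(\tau-\sigma)^{-\alpha}$.

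First I would set $\tilde z(x,\tau):=z(x,ae^{\tau})$ and $\tilde u(\tau):=u(ae^{\tau})$, and carefully verify that
\begin{equation*}
{}^{HC}_{a}D^{\alpha}_{t}z(x,t)\Big|_{t=ae^{\tau}}={}^{C}_{0}D^{\alpha}_{\tau}\tilde z(x,\tau),
\end{equation*}
by changing variables $s=ae^{\sigma}$ in the integral representation of the Hadamard--Caputo derivative given in the preliminary section (using $ds/s=d\sigma$ and $\log(t/s)=\tau-\sigma$). Thus the transformed system on $[0,\log(b/a)]$ reads $^{C}_{0}D^{\alpha}_{\tau}\tilde z=A\tilde z+B\tilde u$ with $\tilde z(x,0)=z_{0}(x)$ and the same boundary condition.

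Next I would translate the growth hypothesis on $\|Bu(t)\|$ into the corresponding bound on $\|B\tilde u(\tau)\|$ on $(0,\log(b/a))$ required by Theorems 3.6 and 3.8 of [A18]. Once the hypotheses are met, those theorems apply and yield the unique mild solution
\begin{equation*}
\tilde z(x,\tau)=S_{\alpha}(\tau)z_{0}(x)+\int_{0}^{\tau}(\tau-\sigma)^{\alpha-1}K_{\alpha}(\tau-\sigma)B\tilde u(\sigma)\,d\sigma,
\end{equation*}
with $S_\alpha$ and $K_\alpha$ exactly the operator families built from $\phi_{\alpha}$ and the $C_{0}$-semigroup $T(t)$ as in the statement.

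Finally I would invert the substitution: put $\tau=\log(t/a)$ and $\sigma=\log(s/a)$, so that $d\sigma=ds/s$ and $\tau-\sigma=\log(t/s)$. This converts the convolution against $(\tau-\sigma)^{\alpha-1}$ into convolution against $(\log(t/s))^{\alpha-1}$ with measure $ds/s$, and delivers the claimed formula for $z(x,t)$. Uniqueness follows from uniqueness of $\tilde z$ together with the fact that $\tau\mapsto ae^{\tau}$ is a bijection between $[0,\log(b/a)]$ and $[a,b]$. The main obstacle I anticipate is not the computation itself but the careful bookkeeping that shows the assumed pointwise bound $\|Bu(t)\|\le Mt^{k}(1-t)^{l}$ indeed translates into the admissibility class required by the cited theorems, and that the absolute continuity condition $z(x,\cdot)\in AC[a,b]$ corresponds to $\tilde z(x,\cdot)\in AC[0,\log(b/a)]$ after the change of variable; this is where I expect most of the technical work to sit.
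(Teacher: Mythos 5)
Your proposal is correct in outline, but it takes a genuinely different route from the paper. The paper works directly in the Hadamard setting: it first observes that since $u\in L^{2}([a,b],\mathbb{R}^{m})$ and $B$ is bounded one may take $k=l=0$ in the hypothesis, then rewrites the problem as the Volterra-type integral equation
\begin{equation*}
\eta(x,t)=z_{0}(x)+\frac{1}{\Gamma(\alpha)}\int_{a}^{t}\Bigl(\log\frac{t}{s}\Bigr)^{\alpha-1}\bigl(A\eta(x,s)+Bu(s)\bigr)\frac{ds}{s},
\end{equation*}
and runs a Picard iteration (following Theorems 3.6 and 3.8 of [A18]) whose $i$-th iterate is an explicit partial sum in powers of $A$ times $(\log(t/a))^{k\alpha}/\Gamma(k\alpha+1)$; the limit is then identified with the $S_{\alpha}$, $K_{\alpha}$ representation by uniqueness. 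You instead conjugate by the substitution $\tau=\log(t/a)$, under which ${}^{HC}_{a}D^{\alpha}_{t}$ becomes the ordinary Caputo derivative ${}^{C}_{0}D^{\alpha}_{\tau}$ (your computation $ds/s=d\sigma$, $\log(t/s)=\tau-\sigma$ is exactly right, since $\delta f(s)\,ds/s=f'(s)\,ds$), and then import the established Caputo mild-solution theory wholesale. What your approach buys is economy and rigor for unbounded $A$: the subordination formulas for $S_{\alpha}$ and $K_{\alpha}$ in terms of $\phi_{\alpha}$ and the semigroup $T$ are exactly the Zhou--Jiao construction for Caputo evolution equations, so nothing needs to be re-proved, whereas the paper's iteration formally applies arbitrary powers of $A$ to $z_{0}$ and must fall back on a uniqueness argument to make sense of the operator form. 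What the paper's approach buys is self-containedness in the Hadamard framework without relying on the equivalence of derivative types. The one point you should make fully explicit is the translation of the admissibility hypothesis: as in the paper, the bound $\|Bu(t)\|\le Mt^{k}(1-t)^{l}$ reduces to plain boundedness of $B\tilde u$ on $[0,\log(b/a)]$ (take $k=l=0$), which is all the cited Caputo theory requires, so this is a non-issue rather than the technical obstacle you anticipate.
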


\noindent {\it{Proof.}}
Because of the equivalence between $\|\cdot\|_{L^{1}}$ and $\|\cdot\|_{L^{2}}$ in a bounded domain, thus, $u(t)\in L^{2}\left([a,b],\mathbb{R}^{m}\right)$ is bounded.
This, together with that $B$ is a bounded linear operator, yields that we can choose $k=l=0$  such that for some $M>0$, $\parallel Bu(t)\parallel\leq\parallel B\parallel\parallel u\parallel\leq M$ to satisfy the assumption in this lemma.

We choose the Picard function sequence as $\eta_{0}(x,t)=z_{0}(x)$ and
\begin{equation*}
\begin{aligned}
\eta_{i}(x,t)=z_{0}(x)+\frac{1}{\Gamma(\alpha)}
\int^{t}_{a} \left(\log\frac{t}{s}\right)^{\alpha-1}\left(A\eta_{i-1}(x,s)+Bu(s)\right)\frac{ds}{s},
\end{aligned}
\end{equation*}
for $x\in\Omega$, $t\in [a,b]$ and $i=1,2,\cdots$.
Similar to Claim 1$\sim$3 in \cite{A18}, we can prove that\\
(i) $\eta_{i}(x,t)\in C\left(\Omega\times[a,b]\right)$, $i=1,2,\cdots$;\\
(ii) $\left\{\eta_{i}(x,t)\right\}_{n\geq 1}$ converges uniformly to $\eta(x,t)$ with $(x,t)\in \Omega\times[a,b]$;\\
(iii) $\eta(x,t)=\lim\limits_{i\rightarrow\infty} \eta_{i}(x,t)$ is the unique continuous solution of the Picard function sequence given by
\begin{equation*}
\begin{aligned}
\eta (x,t)=z_{0}(x)+\frac{1}{\Gamma(\alpha)}
\int^{t}_{a} \left(\log\frac{t}{s}\right)^{\alpha-1}\left(A\eta (x,s)+Bu(s)\right)\frac{ds}{s}.
\end{aligned}\end{equation*}
Moreover, by applying the same iteration procedure as that in Theorem 3.8 of \cite{A18}, it yields that
\begin{equation*}
\begin{aligned}
\eta_{i}(x,t)= & \,\, z_{0}(x)\sum_{k=0}^{i}\frac{A^{k}\left(\log\frac{t}{a}\right)^{k\alpha}}{\Gamma(k\alpha+1)}+\frac{1}{\Gamma(\alpha)}
\int^{t}_{a} \left(\log\frac{t}{s}\right)^{\alpha-1}\mu_{i}(s)Bu(s)\frac{ds}{s}\\
\rightarrow & \,\, S_{\alpha}\left(\log\frac{t}{a}\right)z_{0}(x)+\int_a^t\left(\log\frac{t}{s}\right)^{\alpha-1}K_{\alpha}\left(\log\frac{t}{s}\right)Bu(s)\frac{ds}{s}
\end{aligned}
\end{equation*}
as $i\rightarrow\infty$, where $\mu_{i}(s)=\sum\limits_{k=0}^{i}\frac{A^{k}\left(\log\frac{t}{s}\right)^{k\alpha}}{\Gamma((k+1)\alpha)}.$
$\,\,\,\,\,\,\,\,\,\,\,\,\,\,\,\,\,\,\,\,\,\,\,\,\,\,\,\,\,\,\,\,\,\,\,\,\,\,\,\,\,\,\,\,\,\,\,\,\,\,\,\,\,\,\,\,\,\,\,\,\,\,\,\,\,\,\,\,\,\,\,\,\,\,\,\,\,\,\,\,\,\,\,\,\,\,\,\,\,\,\,\,\,\,\,\,\,\,\,\,\,\,\,\,\,\,\square$

\begin{remark}
For the abstract operators $A$ and $B$ involved in system \dref{system}, the last formula is given in terms of the operator $S_{\alpha}(t)$ and $K_{\alpha}(t)$,
because of the uniqueness of the solution for a linear system (see e.g. \cite{A3} or the Theorem 3.8 of \cite{A18}).
Moreover, the uniqueness of the solution shows that when $A$ is a real number, $S_{\alpha}(t)=E_{\alpha}\left(At^{\alpha}\right)$,   $K_{\alpha}(t)=E_{\alpha,\alpha}\left(At^{\alpha}\right)$ and the solution coincides with that in \cite{A18}.
For more arguments and properties of $S_{\alpha}(t)$, $K_{\alpha}(t)$, $\phi_{\alpha}(t)$ and $\eta_{\alpha}(t)$, we refer the reader to \cite{A37,A38,A39}.
\end{remark}

\begin{lemma}
\label{2.6}
Let $0<\alpha<1$. Then the following problem
\begin{equation*}
\left\{
\begin{aligned}
\overset{}\, &
^{H}_{a}D^{\alpha}_{t}z(x,t)=Az(x,t)\mbox{ in }\Upsilon,\\
            & \lim_{t\rightarrow a} \,^{H}_{a}D^{\alpha-1}_{t}z(x,t)=z_{0}(x)\mbox{ in }\Omega,\\
            & z(\xi,t)=0\mbox{ on }\partial\Omega\times[a,b]\\
\end{aligned}
\right.
\end{equation*}
has a unique solution satisfying
\begin{equation*}
z(x,t)=
\left(\log\frac{t}{a}\right)^{\alpha-1}K_{\alpha}\left(\log\frac{t}{s}\right)z_{0}(x).
\end{equation*}
\end{lemma}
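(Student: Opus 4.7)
The plan is to reduce the Hadamard problem to a standard Riemann--Liouville problem via the change of variables $\tau = \log(t/a)$, $y(x,\tau) := z(x, a e^{\tau})$, on the interval $[0, \log(b/a)]$. Under this substitution the operator $\delta = t\,\partial_t$ becomes $\partial_\tau$, and the substitution $s = a e^\sigma$ in (5) turns $ds/s$ into $d\sigma$ and the kernel $(\log(t/s))^{n-\alpha-1}$ into $(\tau - \sigma)^{n-\alpha-1}$. A direct computation therefore gives
$$^{H}_{a}D^{\alpha}_{t} z(x,t) = {}^{RL}_{0}D^{\alpha}_{\tau} y(x,\tau), \qquad ^{H}_{a}D^{\alpha-1}_{t} z(x,t) = {}^{RL}_{0}I^{1-\alpha}_{\tau} y(x,\tau),$$
and the initial trace $t \to a^+$ corresponds to $\tau \to 0^+$. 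So the stated Hadamard Cauchy problem is equivalent to the classical Riemann--Liouville problem
$$^{RL}_{0}D^{\alpha}_{\tau} y = Ay \text{ in } \Omega\times(0,\log(b/a)], \quad \lim_{\tau\to 0^+}{}^{RL}_{0}I^{1-\alpha}_{\tau} y(x,\tau) = z_0(x), \quad y|_{\partial\Omega}=0.$$

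Next I would invoke (or briefly reprove) the well-known representation of the mild solution of this Riemann--Liouville Cauchy problem in a semigroup setting, namely $y(x,\tau) = \tau^{\alpha-1} K_\alpha(\tau) z_0(x)$, where $K_\alpha$ is exactly the subordination operator introduced in Lemma \ref{2.5}. This can be derived either by Laplace transform in $\tau$ (using $\mathcal{L}\{\tau^{\alpha-1} K_\alpha(\tau) z_0\}(\lambda) = (\lambda^{\alpha} - A)^{-1} z_0$, which matches the Laplace transform of the Riemann--Liouville equation with the prescribed initial trace), or by a Picard iteration along the lines of Lemma \ref{2.5} started from $\eta_0(x,\tau) = \tau^{\alpha-1} z_0(x)/\Gamma(\alpha)$, whose limit is identified through the stable density $\phi_\alpha$. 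Uniqueness follows by the same contraction/iteration argument, or by appeal to Theorem 3.6 of \cite{A18} after the change of variables. Transforming back through $\tau = \log(t/a)$ yields the announced formula
$$z(x,t) = \left(\log\frac{t}{a}\right)^{\alpha-1} K_\alpha\!\left(\log\frac{t}{a}\right) z_0(x).$$

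The main technical obstacle is the rigorous handling of the singular initial trace, since the solution is unbounded as $t \to a^+$ (the factor $(\log(t/a))^{\alpha-1}$ blows up). One must verify that the expression is locally integrable with respect to the Hadamard measure $ds/s$ near $s=a$ (which requires $\alpha>0$), and that applying ${}^{H}_{a}D^{\alpha-1}_{t}$ and letting $t \to a^+$ indeed recovers $z_0$. After the change of variables this reduces to the standard identity $\lim_{\tau\to 0^+}{}^{RL}_{0}I^{1-\alpha}_{\tau}(\tau^{\alpha-1} K_\alpha(\tau) z_0) = z_0$, which follows from $K_\alpha(0) = 1/\Gamma(\alpha)$ together with $I^{1-\alpha}_{\tau}(\tau^{\alpha-1}) = \Gamma(\alpha)$; this deserves a careful justification using dominated convergence on the subordination integral defining $K_\alpha$.
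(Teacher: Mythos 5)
Your argument is correct, but it takes a different route from the paper. The paper disposes of Lemma \ref{2.6} in one line (``similar to Lemma \ref{2.5}''), i.e.\ its intended proof is a Picard iteration carried out directly on the Hadamard Volterra equation $z(x,t)=\frac{(\log(t/a))^{\alpha-1}}{\Gamma(\alpha)}z_0(x)+\frac{1}{\Gamma(\alpha)}\int_a^t(\log\frac{t}{s})^{\alpha-1}Az(x,s)\frac{ds}{s}$, with the limit of the iterates identified as $(\log\frac{t}{a})^{\alpha-1}K_\alpha(\log\frac{t}{a})z_0$ exactly as in the proof of Lemma \ref{2.5}. You instead transplant the whole problem to the Riemann--Liouville setting via $\tau=\log(t/a)$ and then quote the standard subordination representation $y(\tau)=\tau^{\alpha-1}K_\alpha(\tau)z_0$ of the RL mild solution. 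Both are valid; the substitution identities you use are precisely those verified in Lemma \ref{2.8} (in a reflected form), so your reduction is consistent with the paper's toolkit. What your route buys is that the existence, uniqueness and the initial-trace identity $\lim_{\tau\to0^+}{}^{RL}_{0}I^{1-\alpha}_{\tau}\bigl(\tau^{\alpha-1}K_\alpha(\tau)z_0\bigr)=z_0$ all come for free from the well-developed RL theory (e.g.\ the Zhou--Jiao framework cited as \cite{A39}), and your explicit discussion of the singular trace is more careful than anything in the paper; what the paper's route buys is self-containedness within the Hadamard calculus and uniformity with the proof of Lemma \ref{2.5}. Two small points: your check $K_\alpha(0)=1/\Gamma(\alpha)$ via $\alpha\int_0^\infty s\phi_\alpha(s)\,ds=\alpha/\Gamma(1+\alpha)$ is right, and your final formula silently corrects a typo in the statement, where $K_\alpha(\log\frac{t}{s})$ should read $K_\alpha(\log\frac{t}{a})$.
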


The proof is similar to that of Lemma \ref{2.5} and we omit it.

%

\begin{lemma}\cite{A35}
Let $\Omega\subseteq \mathbb{R}^{n}$ be a bounded open set and $C^{\infty}_{0}(\Omega)$ be the set of infinitely differentiable functions on $\Omega$ with compact support. If for $v_{1}\in L^{1}(\Omega)$, such that $$\int_{\Omega} v_{1}(x)v_{2}(x)dx=0,\,\,\,\,\forall v_{2}\in C^{\infty}_{0}(\Omega).$$
Then $v_{1}=0,\,\,a.e.\,\,in\,\,\Omega.$
\end{lemma}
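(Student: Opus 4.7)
My plan is to prove this by the standard mollification argument — the statement is essentially the classical du Bois--Reymond lemma (fundamental lemma of the calculus of variations), so the hypothesis of vanishing against every smooth compactly supported test function should force $v_{1}\equiv 0$ almost everywhere on $\Omega$.

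First I would fix a standard mollifier $\rho\in C^{\infty}_{0}(\mathbb{R}^{n})$ with $\rho\geq 0$, $\operatorname{supp}\rho\subset\overline{B(0,1)}$ and $\int\rho\,dx=1$, and set $\rho_{\varepsilon}(x):=\varepsilon^{-n}\rho(x/\varepsilon)$. I would then extend $v_{1}$ by zero outside $\Omega$ so that the convolution $v_{1}\ast\rho_{\varepsilon}$ makes sense on all of $\mathbb{R}^{n}$.

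Next, I would fix an arbitrary compact set $K\subset\Omega$ and pick $\varepsilon_{0}>0$ with $\operatorname{dist}(K,\partial\Omega)>\varepsilon_{0}$. For any $x\in K$ and any $\varepsilon\in(0,\varepsilon_{0})$, the map $y\mapsto\rho_{\varepsilon}(x-y)$ belongs to $C^{\infty}_{0}(\Omega)$, so the hypothesis of the lemma gives directly
\[
(v_{1}\ast\rho_{\varepsilon})(x)=\int_{\Omega}v_{1}(y)\,\rho_{\varepsilon}(x-y)\,dy=0 \quad \text{for every } x\in K.
\]
I would then invoke the standard mollification fact that $v_{1}\ast\rho_{\varepsilon}\to v_{1}$ in $L^{1}(K)$ as $\varepsilon\downarrow 0$ (which itself follows from the density of $C_{c}(\Omega)$ in $L^{1}(\Omega)$ together with uniform continuity of translations in $L^{1}$). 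Combined with the previous display, this forces $v_{1}=0$ a.e.\ on $K$, and exhausting $\Omega$ by a countable increasing family of such compacts yields $v_{1}=0$ a.e.\ in $\Omega$.

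The only step requiring real care is the $L^{1}_{\mathrm{loc}}$-convergence of the mollifications, but this is textbook real analysis. Nothing in the proof interacts with the Hadamard--Caputo derivative or the controllability framework of the paper; the lemma is a purely real-analytic input, which is why the authors simply cite it from \cite{A35} rather than prove it in place.
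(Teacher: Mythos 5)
Your proof is correct and complete: restricting to a compact $K\subset\Omega$ with $\operatorname{dist}(K,\partial\Omega)>\varepsilon_{0}$ guarantees that the translated mollifier is an admissible test function, the hypothesis kills every mollification $v_{1}\ast\rho_{\varepsilon}$ on $K$, and the $L^{1}_{\mathrm{loc}}$ convergence of mollifications then forces $v_{1}=0$ a.e. The paper offers no proof of this lemma at all --- it is simply cited from \cite{A35} --- and your mollification argument is exactly the standard textbook proof of this classical fact, so there is nothing to compare against and nothing to repair.
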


\begin{lemma}
\label{2.8}
Define $Q$ as $Qf(t)=f\left(\frac{ab}{t}\right)$ and assume $0<\alpha<1$. Then we have the following equalities:
\begin{equation*}
\begin{aligned}
(i) \,\,Q^{H}_{a}I^{\alpha}_{t}f(t)=^{H}_{t}I^{\alpha}_{b}Qf(t), \,\, \,\,
(ii)\,\,Q^{H}_{a}D^{\alpha}_{t}f(t)=^{H}_{t}D^{\alpha}_{b}Qf(t),\\
(iii) \,\,^{H}_{a}I^{\alpha}_{t}Qf(t)=Q^{H}_{t}I^{\alpha}_{b}f(t), \,\, \,\,
(iv)\,\,^{H}_{a}D^{\alpha}_{t}Qf(t)=Q^{H}_{t}D^{\alpha}_{b}f(t).\\
\end{aligned}
\end{equation*}
\end{lemma}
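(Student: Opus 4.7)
The plan is to exploit the fact that $Q$ is an involution ($Q^2=\mathrm{Id}$, since $Q(Qf)(t)=f(ab/(ab/t))=f(t)$), so (iii) and (iv) will follow immediately by applying $Q$ to both sides of (i) and (ii) respectively. Thus the real work is in proving (i) and (ii).

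For (i), I would proceed by direct change of variables. Starting from
\begin{equation*}
Q\,{}^{H}_{a}I^{\alpha}_{t}f(t) \;=\; \frac{1}{\Gamma(\alpha)}\int_{a}^{ab/t}\left(\log\frac{ab/t}{s}\right)^{\alpha-1}f(s)\,\frac{ds}{s},
\end{equation*}
I would substitute $s=ab/u$, so that $ds/s=-du/u$, $\log((ab/t)/s)=\log(u/t)$, and the limits $s=a,\,s=ab/t$ become $u=b,\,u=t$. After flipping the limits to absorb the minus sign, the integral becomes $\frac{1}{\Gamma(\alpha)}\int_{t}^{b}(\log(u/t))^{\alpha-1}f(ab/u)\,du/u={}^{H}_{t}I^{\alpha}_{b}Qf(t)$.

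For (ii), the key step is to establish the commutation relation $Q\delta=-\delta Q$, where $\delta=t\frac{d}{dt}$. By the chain rule,
\begin{equation*}
\delta(Qf)(t)=t\cdot\frac{d}{dt}f(ab/t)=t\cdot f'(ab/t)\cdot\bigl(-ab/t^{2}\bigr)=-(ab/t)f'(ab/t)=-Q(\delta f)(t).
\end{equation*}
Then, using the $n=1$ decompositions ${}^{H}_{a}D^{\alpha}_{t}=\delta\circ{}^{H}_{a}I^{1-\alpha}_{t}$ and ${}^{H}_{t}D^{\alpha}_{b}=-\delta\circ{}^{H}_{t}I^{1-\alpha}_{b}$ that follow directly from the definitions \eqref{HD}, I would compute
\begin{equation*}
Q\,{}^{H}_{a}D^{\alpha}_{t}f=Q\delta\,{}^{H}_{a}I^{1-\alpha}_{t}f=-\delta Q\,{}^{H}_{a}I^{1-\alpha}_{t}f=-\delta\,{}^{H}_{t}I^{1-\alpha}_{b}Qf={}^{H}_{t}D^{\alpha}_{b}Qf,
\end{equation*}
where the third equality applies (i) with $\alpha$ replaced by $1-\alpha$.

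I expect the proof to be essentially a bookkeeping exercise, and the only mild obstacle is tracking the two independent sign changes in (ii): one from $Q\delta=-\delta Q$ and one from the $(-1)^{n}$ that distinguishes the right-sided Hadamard derivative from the left-sided one. Fortunately these combine consistently, and (iv) then drops out of (ii) by the involution property $Q^{2}=\mathrm{Id}$.
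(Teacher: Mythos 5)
Your proposal is correct, and for the derivative identities it takes a genuinely different route from the paper. The paper proves (i) and (iv) by brute force: for (i) the same substitution $s\mapsto ab/s$ you use, and for (iv) a second explicit computation in which both ${}^{H}_{a}D^{\alpha}_{t}Qf$ and $Q\,{}^{H}_{t}D^{\alpha}_{b}f$ are unwound from the definition, the substitution is performed inside the integral, and the chain-rule factor $\frac{d}{d(ab/t)}=\frac{-t^{2}}{ab}\frac{d}{dt}$ is tracked by hand; it then declares (ii) and (iii) ``similar.'' You instead isolate the two structural facts that make everything work --- the involution $Q^{2}=\mathrm{Id}$ and the anticommutation $Q\delta=-\delta Q$ --- and combine them with the factorizations ${}^{H}_{a}D^{\alpha}_{t}=\delta\circ{}^{H}_{a}I^{1-\alpha}_{t}$ and ${}^{H}_{t}D^{\alpha}_{b}=-\delta\circ{}^{H}_{t}I^{1-\alpha}_{b}$, so that only one change of variables is ever computed and the remaining three identities follow by operator algebra (note your primary pair is (i)--(ii) with (iii)--(iv) derived, whereas the paper's is (i) and (iv)). Your derivation of (iii) and (iv) from (i) and (ii) via $Q^{2}=\mathrm{Id}$ is easily checked: multiplying $Q\,{}^{H}_{a}I^{\alpha}_{t}={}^{H}_{t}I^{\alpha}_{b}Q$ on both sides by $Q$ gives ${}^{H}_{a}I^{\alpha}_{t}Q=Q\,{}^{H}_{t}I^{\alpha}_{b}$, and likewise for the derivatives. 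What your approach buys is less computation and a cleaner accounting of the two sign changes (from $Q\delta=-\delta Q$ and from the $(-1)^{n}$ in the right-sided derivative), which the paper's direct calculation handles implicitly; what the paper's approach buys is that each identity is verified independently without relying on the $n=1$ factorization of the Hadamard derivative, though that factorization is immediate from the displayed definition \dref{HD}. No gaps.
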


\noindent {\it{Proof.}} $(i)$ From Definition \ref{H}, we have
\begin{equation*}
\begin{aligned}
^{H}_{t}I^{\alpha}_{b}Qf(t)
&=^{H}_{t}I^{\alpha}_{b}f\left(\frac{ab}{t}\right)\\
&=\frac{1}{\Gamma(\alpha)}\int_t^b\left(\log\frac{s}{t}\right)^{\alpha-1}f\left(\frac{ab}{s}\right)\frac{ds}{s} \\
&=\frac{1}{\Gamma(\alpha)}\int_a^\frac{ab}{t}\left(\log\frac{ab}{st}\right)^{\alpha-1}f(s)\frac{ds}{s} \\
&=Q^{H}_{a}I^{\alpha}_{t}f(t).\\
\end{aligned}
\end{equation*}

$(iv)$ It follows from \dref{HD} that
\begin{equation*}
\begin{aligned}
~&^{H}_{a}D^{\alpha}_{t}Qf(t)=^{H}_{a}D^{\alpha}_{t}f\left(\frac{ab}{t}\right)\\
=&\frac{t}{\Gamma(1-\alpha)}\,\frac{d}{dt}\int_a^t\left(\log\frac{t}{s}\right)^{-\alpha}f\left(\frac{ab}{s}\right)\frac{ds}{s}\\
=&\frac{t}{\Gamma(1-\alpha)}\,\frac{d}{dt}\int_\frac{ab}{t}^b \left(\log\frac{ut}{ab}\right)^{-\alpha}f(u)\frac{du}{u}.\\
\end{aligned}
\end{equation*}
On the other hand,
\begin{equation*}
\begin{aligned}
 Q\left(^{H}_{t}D^{\alpha}_{b}f(t)\right)
=&Q\left(\frac{-t}{\Gamma(1-\alpha)}\,\frac{d}{dt}\int_t^b\left(\log\frac{s}{t}\right)^{-\alpha}f(s)\frac{ds}{s}\right)
\end{aligned}
\end{equation*}\begin{equation*}
\begin{aligned}=&\frac{-ab}{t\Gamma(1-\alpha)}\,\frac{d}{d\left(\frac{ab}{t}\right)}\int_\frac{ab}{t}^b\left(\log\frac{st}{ab}\right)^{-\alpha}f(s)\frac{ds}{s}\\
=&\frac{-ab}{t\Gamma(1-\alpha)}\frac{-t^{2}}{ab}\,\frac{d}{dt}\int_\frac{ab}{t}^b\left(\log\frac{st}{ab}\right)^{-\alpha}f(s)\frac{ds}{s}\\
=&\frac{t}{\Gamma(1-\alpha)}\,\frac{d}{dt}\int_\frac{ab}{t}^b \left(\log\frac{st}{ab}\right)^{-\alpha}f(s)\frac{ds}{s}.
\end{aligned}
\end{equation*}
Hence, $^{H}_{a}D^{\alpha}_{t}Qf(t)=Q^{H}_{t}D^{\alpha}_{b}f(t)$. Based on these,
$(ii)$ and $(iii)$ can be proved similarly.
$\,\,\,\,\,\,\,\,\,\,\,\,\,\,\,\,\,\,\,\,\,\,\,\,\,\,\,\,\,\,\,\,\,\,\,\,\,\,\,\,\,\,\,\,\,\,\,\,\,\,\,\,\,\,\,\,\,\,\,\,\,\,\,\,\,\,\,\,\,\,\,\,\,\,\,\,\,\,\,\,\,\,\,\,\,\,\,\,\,\,\,\,\,\,\,\,\,\,\,\,\,\,\,\,\,\,\,\,\,\,\,\,\,\,\,\,\,\,\,\,\,\,\,\,\,\,\,\,\,\,\,\,\,\,\,\,\,\,\,\,\,\,\,\,\,\,\,\,\,\,\,\,\,\,\,\,\,\,\,\,\,\,\,\,\,\,\,\,\,\,\,\,\,\,\,\,\,\,\,\,\,\,\,\,\,\,\,\,\,\,\,\,\,\,\,\,\,\square$

\begin{lemma}\cite{A34}
\label{2.9}
Suppose that $X,\,Y,\,Z$ are reflexive Hilbert spaces, $f\in L(X,Z)$ and $g\in L(Y,Z)$. Then
$$Im(f)\subseteq Im(g)$$
is equivalent to
$$\exists K>0,\,\,s.t.\,\parallel f^{*}z\parallel_{X^{*}}\leq K\parallel g^{*}z\parallel_{Y^{*}},\,\,\forall z\in Z.$$
\end{lemma}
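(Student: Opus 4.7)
The statement is the classical Douglas range inclusion theorem, cast in the reflexive Hilbert space setting, so the plan is to prove each implication by the standard factorization argument in Hilbert spaces.

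For the direction ``norm inequality $\Rightarrow$ range inclusion'', I would first define a linear map $\Phi$ on the subspace $Im(g^{*})\subseteq Y^{*}$ by $\Phi(g^{*}z)=f^{*}z$. The hypothesis makes $\Phi$ both well defined (if $g^{*}z_{1}=g^{*}z_{2}$ then $f^{*}(z_{1}-z_{2})=0$) and bounded with norm at most $K$. Extending $\Phi$ by continuity to $\overline{Im(g^{*})}$ and by zero to its orthogonal complement produces a bounded operator $Y^{*}\to X^{*}$; by Hilbert-space reflexivity this operator is the Banach adjoint $C^{*}$ of some $C\in L(X,Y)$. The identity $C^{*}g^{*}=f^{*}$ then dualizes to $gC=f$, so $Im(f)=Im(gC)\subseteq Im(g)$, as required.

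For the converse, the strategy is to construct a bounded operator $C\in L(X,Y)$ satisfying $f=gC$ and take $K=\|C\|$, since $f^{*}=C^{*}g^{*}$ immediately yields $\|f^{*}z\|_{X^{*}}\le\|C\|\,\|g^{*}z\|_{Y^{*}}$ for every $z\in Z$. For each $x\in X$ the range inclusion $Im(f)\subseteq Im(g)$ guarantees that $g^{-1}(fx)$ is a nonempty affine subset of $Y$ parallel to $Ker(g)$; I would define $Cx$ to be its unique element in the closed subspace $Ker(g)^{\perp}$, i.e.\ its minimum-norm representative. Linearity of $C$ is routine; boundedness is where I would invoke the closed graph theorem, using that if $x_{n}\to x$ and $Cx_{n}\to y$, then $y\in Ker(g)^{\perp}$ (that subspace is closed) and $gy=\lim gCx_{n}=\lim fx_{n}=fx$ by continuity of $g$, so $y$ equals the minimum-norm preimage, namely $Cx$.

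The only delicate point of the argument is the converse direction, which must bypass the fact that $Im(g)$ need not be closed: the minimum-norm selection into the closed subspace $Ker(g)^{\perp}$ combined with the closed graph theorem is exactly what sidesteps this difficulty, since one never has to invert $g$ on a possibly non-closed image. Reflexivity enters only in the forward direction, through the identifications $X^{**}\simeq X$ and $Y^{**}\simeq Y$, which allow the bounded map constructed on the duals to be realized as the Banach adjoint of a genuine operator between the primal spaces.
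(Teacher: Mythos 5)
Your proof is correct. Note, however, that the paper does not prove this lemma at all: it is quoted verbatim from Pritchard and Wirth \cite{A34} and used as a black box, so there is no in-paper argument to compare against. What you have written is the standard Douglas range-inclusion (factorization) argument, and both directions are sound: the map $\Phi(g^{*}z)=f^{*}z$ is well defined and bounded precisely because of the hypothesised inequality, and your appeal to reflexivity to realize the extended $\Phi$ as a Banach adjoint $C^{*}$ is legitimate since every bounded operator between duals of reflexive spaces is weak\textsuperscript{*}-continuous, hence an adjoint; in the converse direction the minimum-norm selection into $Ker(g)^{\perp}$ together with the closed graph theorem correctly handles the possibility that $Im(g)$ is not closed. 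This is a complete proof of a result the paper leaves to the literature.
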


\begin{lemma}\cite{A36}
\label{2.10}
Let $U$ be a closed, convex subset of a Hilbert space $H$. Assume that for $v\in U$, the functional $v\rightarrow J(v)$ is strict convex, differentiable and satisfies $J(v)\rightarrow +\infty$ as $\parallel v\parallel\rightarrow +\infty$. Then the unique element $u$ in $U$ satisfying $J(u)=\inf_{v\in U}J(v)$ is characterized by $$J'(u)(v-u)\geq0,\,\,\,\,\forall v\in U.$$
\end{lemma}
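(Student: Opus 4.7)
The plan is to split the argument into two parts: first establish that a minimizer exists and is unique, and then derive the first-order characterization.

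For existence, since $J(v)\to+\infty$ as $\|v\|\to+\infty$, any minimizing sequence $\{v_n\}\subset U$ with $J(v_n)\to \inf_{v\in U} J(v)$ must be bounded in $H$. By reflexivity of the Hilbert space $H$, I extract a weakly convergent subsequence $v_{n_k}\rightharpoonup u$. The set $U$ is convex and norm-closed, hence weakly closed by Mazur's theorem, so $u\in U$. Because $J$ is convex and differentiable (hence continuous), its sublevel sets $\{v:J(v)\leq c\}$ are convex and closed, therefore weakly closed, which yields the weak lower semicontinuity $J(u)\leq \liminf_k J(v_{n_k})=\inf_{v\in U}J(v)$, so $u$ is a minimizer. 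Uniqueness then follows from strict convexity: two distinct minimizers $u_1\neq u_2$ would give $J\bigl(\tfrac{u_1+u_2}{2}\bigr)<\tfrac{1}{2}(J(u_1)+J(u_2))=\inf_U J$, with $\tfrac{u_1+u_2}{2}\in U$ by convexity, a contradiction.

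For the characterization, I argue by two implications. If $u$ is the minimizer, then for any $v\in U$ and $t\in(0,1]$, convexity of $U$ gives $u+t(v-u)=(1-t)u+tv\in U$, so $J(u+t(v-u))\geq J(u)$. Dividing by $t$ and letting $t\to 0^+$ using the differentiability of $J$ at $u$ yields $J'(u)(v-u)\geq 0$. Conversely, if $u\in U$ satisfies this inequality for every $v\in U$, then convexity of $J$ implies the standard first-order inequality $J(v)\geq J(u)+J'(u)(v-u)\geq J(u)$ for all $v\in U$, so $u$ minimizes $J$ on $U$.

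The main obstacle is the weak lower semicontinuity invoked in the existence step: it rests on the nontrivial fact that convex norm-closed sets in a Hilbert space coincide with weakly closed sets (Mazur's theorem), applied to the sublevel sets of $J$. All other ingredients are routine: boundedness of minimizing sequences from coercivity, uniqueness from strict convexity, and the two-sided variational inequality directly from the definitions of convexity and Gateaux differentiability.
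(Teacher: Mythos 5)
Your proof is correct. The paper does not prove this lemma at all --- it is quoted verbatim with a citation to Lions' monograph --- and your argument (coercivity plus weak compactness and weak lower semicontinuity of convex functionals for existence, strict convexity for uniqueness, and the two-sided variational inequality via convexity of $U$ and the first-order convexity inequality) is precisely the standard proof found in that reference.
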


\section{Main results}
  \label{sec:res}

By Proposition 3.1 in \cite{A40}, since system \dref{system} is a linear system, without loss of generality, we can suppose $z_{0}(x)=0$. Then, the solution of system \dref{system} reduces to
\begin{equation}
z(x,t)=\int_a^t\left(\log\frac{t}{s}\right)^{\alpha-1}K_{\alpha}\left(\log\frac{t}{s}\right)Bu(s)\frac{ds}{s}.
\end{equation}

Define the operator $H:\,L^{2}\left([a,b],\mathbb{R}^{m}\right)\rightarrow L^{2}(\Omega)$,
\begin{equation}
\begin{aligned}
Hu
\triangleq&\int^{b}_{a}\left(\log\frac{b}{s}\right)^{\alpha-1}K_{\alpha}\left(\log\frac{b}{s}\right)Bu(s)\frac{ds}{s}\\ =&\int^{\log\frac{b}{a}}_{0}s^{\alpha-1}K_{\alpha}(s)Bu(be^{-s})ds£¬ \\
\end{aligned}
\end{equation}
for any $u(t)\in L^{2}\left([a,b],\,\mathbb{}R^{m}\right)$.
Since $B$ is bounded and linear,
let $H^{*}$, be the adjoint operator of $H$, that is, $\langle Hu,\,v\rangle=\langle u,\,H^{*}v\rangle$ for any $v\in L^{2}(\Omega)$.
For any $v\in L^{2}(\Omega)$, one has
\begin{equation*}
\begin{aligned}
\langle Hu,\,v\rangle
&=\left\langle\int^{b}_{a}\left(\log\frac{b}{s}\right)^{\alpha-1}K_{\alpha}\left(\log\frac{b}{s}\right)Bu(s)\frac{ds}{s},\,v\right\rangle \\
&= \int^{b}_{a}\left\langle\left(\log\frac{b}{s}\right)^{\alpha-1}K_{\alpha}\left(\log\frac{b}{s}\right)\frac{Bu(s)}{s},\,v\right\rangle ds \\
&= \int^{b}_{a} \left\langle u(s),\,\left(\log\frac{b}{s}\right)^{\alpha-1}K^{*}_{\alpha}\left(\log\frac{b}{s}\right)\frac{B^{*}v}{s}\right\rangle ds.\\
\end{aligned}
\end{equation*}
Hence,
\begin{equation}
\label{13}
H^{*}v=\frac{1}{t}B^{*}\left(\log\frac{b}{t}\right)^{\alpha-1}K_{\alpha}^{*}\left(\log\frac{b}{t}\right)v,\,\,\forall v\in L^{2}(\Omega).
\end{equation}
In addition, the adjoint operator of the restriction map $p_{\omega}$ is defined by
\begin{equation}
p_{\omega}^{*}f(x)=\left\{
\begin{aligned}
\overset{}\, &
f(x),\,\,\,\,\,\,\,x\in\omega,\\
            & 0,\,\,\,\,\,\,\,\,\,\,\,\,\,\,\,x\in\Omega\setminus\omega.\\
\end{aligned}
\right.
\end{equation}
When $n=1$, we denote $p_{\omega}$ as $p_{1,\omega}$, so does $p^{*}_{1,\omega}$.

We can deduce the following results.

\begin{theorem}
\label{3.1}
The following statements are equivalent:\\
$(i)$ System \dref{system} is regionally gradient exactly controllable on $\omega$ at time $b$;\\
$(ii)$ $Im\left(p_{\omega}\nabla H\right)=\left(L^{2}(\omega)\right)^{n}$;\\
$(iii)$ $Ker\left(p_{\omega}\right)+Im(\nabla H)=\left(L^{2}(\Omega)\right)^{n}$;\\
$(iv)$ There exists $K>0$, such that $\forall z\in \left(L^{2}(\omega)\right)^{n}$,
$$\parallel z\parallel_{\left(L^{2}(\omega)\right)^{n}}\leq K\parallel H^{*}\nabla^{*}p_{\omega}^{*}z\parallel_{L^{2}\left(a,b;R^{m}\right)}.$$
\end{theorem}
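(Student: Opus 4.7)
The structure of the four statements suggests a cyclic or hub-style argument where (ii) is the natural pivot since it expresses exact controllability as surjectivity of the composite operator $p_\omega \nabla H$. The plan is to establish (i) $\Leftrightarrow$ (ii), then (ii) $\Leftrightarrow$ (iii) via a quotient/complement argument, and finally (ii) $\Leftrightarrow$ (iv) by invoking Lemma \ref{2.9}.

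For (i) $\Leftrightarrow$ (ii), I would first observe that because $B$ takes values in $H^{1}_{0}(\Omega)$, the solution $z_u(\cdot,b) = Hu$ belongs to $H^{1}_{0}(\Omega)$, so that $\nabla H u \in (L^{2}(\Omega))^n$ is well-defined. Definition \ref{2.3} of regional gradient exact controllability is then literally the statement that the range of the bounded linear map $u \mapsto p_\omega \nabla H u$ equals $(L^2(\omega))^n$, which is (ii).

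For (ii) $\Leftrightarrow$ (iii), the key observation is that $p_\omega : (L^2(\Omega))^n \to (L^2(\omega))^n$ is itself surjective (given any $f \in (L^2(\omega))^n$, extend by zero to $\Omega$). Consequently, for any linear subspace $V \subseteq (L^2(\Omega))^n$ one has $p_\omega(V) = (L^2(\omega))^n$ if and only if $V + \mathrm{Ker}(p_\omega) = (L^2(\Omega))^n$. Applying this with $V = \mathrm{Im}(\nabla H)$ and using $\mathrm{Im}(p_\omega \nabla H) = p_\omega(\mathrm{Im}(\nabla H))$ delivers the equivalence.

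For (ii) $\Leftrightarrow$ (iv), I would apply Lemma \ref{2.9} with $X = (L^2(\omega))^n$, $Y = L^2([a,b],\mathbb{R}^m)$, $Z = (L^2(\omega))^n$, taking $f = \mathrm{Id}_{(L^2(\omega))^n}$ and $g = p_\omega \nabla H$. All three spaces are Hilbert and reflexive. Then $\mathrm{Im}(f) \subseteq \mathrm{Im}(g)$ is precisely (ii), while the conclusion of the lemma, using $f^* = \mathrm{Id}$ and $g^* = H^* \nabla^* p_\omega^*$ (with $H^*$ as computed in \eqref{13}), is exactly the estimate in (iv). The main technical point to check carefully here is the identification of adjoints in the correct duality pairings and making sure that $\nabla: H^1_0(\Omega) \to (L^2(\Omega))^n$ composed with $H$ has an adjoint agreeing with $H^* \nabla^*$ on the appropriate domain; since $\nabla$ is bounded on $H^1_0(\Omega)$ and $H$ maps into $H^1_0(\Omega)$, this is a routine composition of bounded operators between Hilbert spaces. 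The only real obstacle I anticipate is keeping the domains and codomains of the operators (and their duals) straight across the three equivalences; once that bookkeeping is done, each implication follows directly from the cited lemma or from elementary linear-algebraic facts about surjective maps between Hilbert spaces.
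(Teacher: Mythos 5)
Your proposal is correct and follows essentially the same route as the paper: (i)$\Leftrightarrow$(ii) directly from Definition \ref{2.3}, (ii)$\Leftrightarrow$(iv) by applying Lemma \ref{2.9} with $f$ the identity on $\left(L^{2}(\omega)\right)^{n}$ and $g=p_{\omega}\nabla H$, and (ii)$\Leftrightarrow$(iii) by the same decomposition $x=\widetilde{x}+y$ with $y\in Ker(p_{\omega})$, which you merely package as the general fact that for the surjective map $p_{\omega}$ one has $p_{\omega}(V)=\left(L^{2}(\omega)\right)^{n}$ iff $V+Ker(p_{\omega})=\left(L^{2}(\Omega)\right)^{n}$. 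Your choice $Y=L^{2}\left([a,b],\mathbb{R}^{m}\right)$ in Lemma \ref{2.9} is in fact the correct domain for $g$ (the paper writes $Y=\left(L^{2}(\Omega)\right)^{n}$, which appears to be a slip), so no substantive difference remains.
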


\begin{proof}
The equivalence between $(i)$ and $(ii)$ can be easily obtained from Definition \ref{2.3}. In Lemma \ref{2.9}, choose $X=Z=\left(L^{2}(\omega)\right)^{n}$, $Y=\left(L^{2}(\Omega)\right)^{n}$, $f=I$, the identity operator and $g=p_{\omega}\nabla H$, we have $(i)$$\Leftrightarrow$$(iv)$. So we only need to prove $(ii)$$\Leftrightarrow$$(iii)$.

$(ii)$$\Rightarrow$$(iii)$: $\forall x_{1}\in Ker\left(p_{\omega}\right)$, $x_{2}\in Im(\nabla H)$, since
$$p_{\omega}\left(x_{1}+x_{2}\right)=p_{\omega}x_{2}\in Im\left(p_{\omega}\nabla H\right)=\left(L^{2}(\omega)\right)^{n},$$
it follows that $x_{1}+x_{2}\in \left(L^{2}(\Omega)\right)^{n}$, that is,
\begin{equation}
\label{15}
Ker\left(p_{\omega}\right)+Im(\nabla H)\subseteq \left(L^{2}(\Omega)\right)^{n}.
\end{equation}

Next, $\forall x\in \left(L^{2}(\Omega)\right)^{n}$, define $\widetilde{x}\triangleq p_{\omega}x\in \left(L^{2}(\omega)\right)^{n}$ $=Im\left(p_{\omega}\nabla H\right)$, and $y\triangleq x-\widetilde{x}$.
So, there exists a $u\in L^{2}(\Omega)$, such that $p_{\omega}\nabla Hu=\widetilde{x}$. Then $\widetilde{x}\in Im(\nabla H)$. Besides,  $$p_{\omega}y=p_{\omega}\left(x-\widetilde{x}\right)=p_{\omega}x-p_{\omega}\widetilde{x}=0,$$ that is, $y\in Ker\left(p_{\omega}\right)$.
Hence,
\begin{equation}
\left(L^{2}(\Omega)\right)^{n}\subseteq Ker\left(p_{\omega}\right)+Im(\nabla H).
\end{equation}

$(iii)$$\Rightarrow$$(ii)$: From $(iii)$, for any $x\in \left(L^{2}(\omega)\right)^{n}\subseteq \left(L^{2}(\Omega)\right)^{n}$, there exists $x_{1}\in Ker\left(p_{\omega}\right)$ and $x_{2}\in Im(\nabla H)$ such that $x=x_{1}+x_{2}$.
Then a $u\in L^{2}(\Omega)$ can be found satisfying $\nabla Hu=x_{2}$.
Hence, $$x=p_{\omega}x=p_{\omega}\left(x_{1}+\nabla Hu\right)=p_{\omega}\nabla Hu\in Im\left(p_{\omega}\nabla H\right),$$
that is,
\begin{equation}
\left(L^{2}(\Omega)\right)^{n}\subseteq Im\left(p_{\omega}\nabla H\right).
\end{equation}
Next, $\forall x\in Im\left(p_{\omega}\nabla H\right)$, together with the definition of $H$, we immediately get that there exists a $u\in L^{2}(\Omega)$, such that $p_{\omega}\nabla Hu=x\in \left(L^{2}(\omega)\right)^{n}$, namely
\begin{equation}
\label{18}
Im\left(p_{\omega}\nabla H\right)\subseteq \left(L^{2}(\omega)\right)^{n}.
\end{equation}
Combining \dref{15}$\thicksim$\dref{18}, we get that $(ii)$$\Leftrightarrow$$(iii)$ and complete the proof.
\end{proof}

\begin{theorem}
\label{3.2}
The following statements are equivalent:\\
$(i)$ System \dref{system} is regionally gradient approximately controllable on $\omega$ at time $b$;\\
$(ii)$ $\overline{Im\left(p_{\omega}\nabla H\right)}=\left(L^{2}(\omega)\right)^{n}$;\\
$(iii)$ $Ker\left(p_{\omega}\right)+\overline{Im(\nabla H)}=\left(L^{2}(\Omega)\right)^{n}$;\\
$(iv)$ $p_{\omega}\nabla HH^{*}\nabla^{*}p_{\omega}^{*}$ is a positive definite operator;\\
$(v)$ If $\langle p_{\omega}\nabla Hu,z\rangle=0,\,\forall u\in L^{2}(\Omega)$, leads to $z=0$, where $\langle\cdot,\cdot\rangle$ denotes the inner product.
\end{theorem}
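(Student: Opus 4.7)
My plan is to establish the chain \dref{}(i)$\Leftrightarrow$(ii)$\Leftrightarrow$(iii) by mimicking the structure of Theorem \ref{3.1} with closures inserted, and then to derive (ii)$\Leftrightarrow$(v)$\Leftrightarrow$(iv) from the standard Hilbert-space duality relating density of the range to injectivity of the adjoint. The equivalences (i)--(iii) are the operator-theoretic content, while (iv) and (v) are reformulations of ``dense range equals injective adjoint''.

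First, (i)$\Leftrightarrow$(ii) is immediate from Definition \ref{2.3}: once we reduce to $z_{0}=0$ so that $z_{u}(\cdot,b)=Hu$, regional gradient approximate controllability on $\omega$ says precisely that the set $p_{\omega}\nabla H(L^{2}([a,b],\mathbb{R}^{m}))$ is dense in $(L^{2}(\omega))^{n}$. For (ii)$\Leftrightarrow$(iii), I would repeat line by line the algebraic argument of Theorem \ref{3.1} with $Im(\nabla H)$ and $Im(p_{\omega}\nabla H)$ replaced by their closures throughout. The splitting $x=(x-p_{\omega}^{*}p_{\omega}x)+p_{\omega}^{*}p_{\omega}x$ still places the first summand in $Ker(p_{\omega})$, and density of $p_{\omega}\nabla H(\cdot)$ in $(L^{2}(\omega))^{n}$ approximates the second summand by elements of $\nabla H(L^{2})$ modulo $Ker(p_{\omega})$; continuity of $p_{\omega}$ then preserves the decomposition under closure.

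Next, (ii)$\Leftrightarrow$(v) is the classical identity $\overline{Im(T)}=Ker(T^{*})^{\perp}$ applied to the operator $T=p_{\omega}\nabla H$. Using the adjoint identity $\langle p_{\omega}\nabla Hu,z\rangle=\langle u,\,H^{*}\nabla^{*}p_{\omega}^{*}z\rangle$, the hypothesis in (v) says exactly $H^{*}\nabla^{*}p_{\omega}^{*}z=0$, i.e., $z\in Ker\bigl((p_{\omega}\nabla H)^{*}\bigr)$, and the equivalence follows. Finally, for (iv)$\Leftrightarrow$(v) I would observe that $p_{\omega}\nabla HH^{*}\nabla^{*}p_{\omega}^{*}$ is self-adjoint and nonnegative because
\begin{equation*}
\langle p_{\omega}\nabla HH^{*}\nabla^{*}p_{\omega}^{*}z,\,z\rangle=\|H^{*}\nabla^{*}p_{\omega}^{*}z\|^{2},
\end{equation*}
so positive definiteness of this operator amounts to saying the right-hand side vanishes only when $z=0$, which is precisely condition (v).

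The main obstacle I anticipate is not any one step but rather bookkeeping in (ii)$\Leftrightarrow$(iii): one must be careful that the closure of the sum $Ker(p_{\omega})+Im(\nabla H)$ equals $Ker(p_{\omega})+\overline{Im(\nabla H)}$, which is automatic here because $Ker(p_{\omega})$ is a closed subspace and $p_{\omega}$ is continuous, but deserves explicit mention. Everything else reduces to routine applications of Hilbert-space duality and the boundedness of $\nabla:H^{1}_{0}(\Omega)\to(L^{2}(\Omega))^{n}$ and of $H$ as established in the preliminaries.
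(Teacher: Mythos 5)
Your proposal is correct and follows essentially the same route as the paper: the chain $(i)\Leftrightarrow(ii)\Leftrightarrow(iii)$ by adapting Theorem \ref{3.1} with closures, and the identity $\langle p_{\omega}\nabla HH^{*}\nabla^{*}p_{\omega}^{*}z,\,z\rangle=\|H^{*}\nabla^{*}p_{\omega}^{*}z\|^{2}$ combined with the dense-range/injective-adjoint duality to handle $(iv)$ and $(v)$. The only cosmetic difference is that you route $(iv)$ through $(v)$ whereas the paper proves $(ii)\Leftrightarrow(iv)$ and $(ii)\Leftrightarrow(v)$ separately; your explicit remark on why closures pass through the sum with the closed subspace $Ker(p_{\omega})$ is a point the paper leaves implicit.
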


\begin{proof}
Similar to Theorem \ref{3.1}, one has $(i)$$\Leftrightarrow$$(ii)$$\Leftrightarrow$$(iii)$. Then we prove $(ii)$$\Leftrightarrow$$(iv)$ and $(ii)$$\Leftrightarrow$$(v)$.

$(ii)$$\Leftrightarrow$$(iv)$: For any $y$, $z\in \left(L^{2}(\omega)\right)^{n}$,
$$\langle p_{\omega}\nabla HH^{*}\nabla^{*} p_{\omega}^{*}y,\,z\rangle=\langle y,\,p_{\omega}\nabla HH^{*}\nabla^{*} p_{\omega}^{*}z\rangle$$
and $\langle p_{\omega}\nabla HH^{*}\nabla^{*} p_{\omega}^{*}y,\,y\rangle=\langle H^{*}\nabla^{*} p_{\omega}^{*}y,\,H^{*}\nabla^{*} p_{\omega}^{*}y\rangle$. These, together with the equivalence between $\overline{Im\left(p_{\omega}\nabla H\right)}=\left(L^{2}(\omega)\right)^{n}$ and the domain of
$p_{\omega}\nabla HH^{*}\nabla^{*}p_{\omega}^{*}$ is dense in $\left(L^{2}(\omega)\right)^{n}$, lead to the result.

$(ii)$$\Leftrightarrow$$(v)$: We can easily see that $(ii)$$\Rightarrow$$(v)$ and $\overline{Im\left(p_{\omega}\nabla H\right)}\subseteq \left(L^{2}(\omega)\right)^{n}$.
Moreover, we claim that $\left(L^{2}(\omega)\right)^{n}\subseteq\overline{Im\left(p_{\omega}\nabla H\right)}$. If not, there is a nonzero $z\in \left(L^{2}(\omega)\right)^{n}\setminus\overline{Im\left(p_{\omega}\nabla H\right)}$, such that
\begin{equation*}
\langle p_{\omega}\nabla Hu,\,z\rangle=0,\,\,\forall u\in L^{2}(\Omega),
\end{equation*}
which leads to a contradiction.
\end{proof}

Next, we will take research on the description of the actuators and give the minimum number of the actuators to guarantee the desired performance.
By \cite{A41}, we see that an actuator can be characterized by $\left(P,d\right)$, where $P\subseteq\Omega$ represents the working area of the actuator and $d$ is its spatial distribution.
Here, we suppose that the system is controlled by $m$ actuators with corresponding characteristics $\left(P_{i},d_{i}(x)\right)$, where $d_{i}(x)\in L^{2}(\Omega)$, $i=1,2,\ldots,m$. Let
\begin{equation}
Bu=\sum^{m}_{i=1}\chi_{P_{i}}d_{i}(x)u_{i}(t),
\end{equation}
where $u=\left(u_{1},\ldots,u_{m}\right)$, $u_{i}\in L^{2}\left([a,b],\mathbb{R}\right)$ and $\chi_{P_{i}}$ denotes the indicator function on $P_{i}$.
Then, system \dref{system} is equivalent to
\begin{equation}
\left\{
\begin{aligned}
\overset{}\, &
^{HC}_{a}D^{\alpha}_{t}z(x,t)=Az(x,t)+\sum^{m}_{i=1}\chi_{P_{i}}d_{i}(x)u_{i}(t),\\
            & z(x,a)=0 \mbox{ in }\Omega,\\
            & z(\xi,t)=0\mbox{ on }\partial\Omega\times[a,b],\\
\end{aligned}
\right.
\end{equation}
whose solution is given by
\begin{equation}
\label{21}
\begin{aligned}
z(x,t)
=&\int_a^t\left(\log\frac{t}{s}\right)^{\alpha-1}K_{\alpha}\left(\log \frac{t}{a}\right)
\sum^{m}_{i=1}\chi_{P_{i}}d_{i}(x)u_{i}(s)\frac{ds}{s} \\
=&\int_0^{\log\frac{t}{a}}s^{\alpha-1}K_{\alpha}(s)\chi_{P_{i}}d_{i}(x)u_{i}(te^{-s})ds.\\
\end{aligned}
\end{equation}

Suppose that $\lambda_{1},\ldots,\lambda_{k},\ldots$ are the eigenvalues of $-A$ with corresponding multiplicities $r_{1},\ldots,\,r_{k},\ldots$, satisfying $0<\lambda_{1}<\ldots<\lambda_{k}<\ldots$, and $\lim_{k\rightarrow \infty}\lambda_{k}=\infty$.
The orthonormal eigenfunctions $\alpha_{kj}(x)$, $j=1,\ldots,r_{k}$ corresponding to $\lambda_{k}$, for $k=1,2,\ldots$ consist of an orthonormal basis of $L^{2}(\Omega)$.
With this, for any $z\in L^{2}(\Omega)$, the $C_{0}-$semigroup $\{T(t)\}_{t\geq0}$ can be shown as
\begin{equation*}
T(t)z=\sum^{\infty}_{k=1}\sum^{r_{k}}_{j=1}e^{-\lambda_{j}t}\langle z,\,\alpha_{kj}(x)\rangle\alpha_{kj}(x),
\end{equation*}
that is
\begin{equation}
\label{22}
z=\sum^{\infty}_{k=1}\sum^{r_{k}}_{j=1}\langle z,\alpha_{kj}(x)\rangle\alpha_{kj}(x).
\end{equation}
From \dref{22}, the solution of system \dref{system} in \dref{21} can be expressed by
\begin{equation}
\label{23}
\begin{aligned}
z(x,t)=\sum^{\infty}_{k=1}\sum^{r_{k}}_{j=1}\sum^{m}_{i=1}&\int_0^{\log\frac{t}{a}}s^{\alpha-1}E_{\alpha,\,\alpha}\left(-\lambda_{k}s^{\alpha}\right)
 d^{i}_{kj}(x)u_{i}\left(te^{-s}\right)ds\cdot\alpha_{kj}(x),
\end{aligned}
\end{equation}
where $d^{i}_{kj}(x)\triangleq\langle\chi_{P_{i}}d_{i}(x),\alpha_{kj}(x)\rangle$.

\begin{theorem}
\label{3.3}
Define
\begin{equation*}
D_{k}^{l}
\triangleq\frac{\partial}{\partial x_{l}}\begin{bmatrix}
d^{1}_{k1}(x)\alpha_{k1}(x)  & \cdots\ & d^{1}_{kr_{k}}(x)\alpha_{kr_{k}}(x)\\
 \vdots   & \cdots  & \vdots  \\
d^{m}_{k1}(x)\alpha_{k1}(x)  & \cdots\ & d^{m}_{kr_{k}}(x)\alpha_{kr_{k}}(x)\\
\end{bmatrix}.
\end{equation*}
Then $\left(P_{i},d_{i}(x)\right),\,i=1,\ldots,m$ are gradient $\omega$-strategic if and only if for any $z\in \left(L^{2}(\omega)\right)^{n}$,
\begin{equation}
\label{24}
\sum^{\infty}_{k=1}\sum^{r_{k}}_{j=1}t^{\alpha-1}E_{\alpha,\,\alpha}\left(-\lambda_{k}t^{\alpha}\right)\sum^{n}_{l=1}D^{l}_{k}\Phi_{kl}=0\,\,\Rightarrow\,\,z=0,
\end{equation}
where $z=\left(z_{1},\ldots,z_{n}\right)^{\top}$, $\Phi_{kl}\triangleq\left(z_{k1l},\ldots,z_{kr_{k}l}\right)^{\top}$ and $z_{kjl}\triangleq\left\langle p^{*}_{1,\omega}z_{l},\,\alpha_{kj}(x)\right\rangle$.
When $n=1$, \dref{24} is equivalent to
$$m\geq r\triangleq \sup\left\{r_{k}\right\}\,\,and\,\,rank\,D^{1}_{k}=r_{k},$$ for $k=1,2,\ldots$.
\end{theorem}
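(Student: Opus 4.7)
My plan is to combine the orthogonality characterisation of regional gradient approximate controllability (Theorem~\ref{3.2}$(v)$) with the eigenfunction expansion~\dref{23} of the solution, and then read off the claimed condition.

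First, by Definition~\ref{2.4} the actuators $(P_i,d_i(x))$ are gradient $\omega$-strategic if and only if system \dref{system} is regionally gradient approximately controllable on $\omega$ at time $b$. By Theorem~\ref{3.2}$(v)$, this is in turn equivalent to the implication
\[
\langle p_{\omega}\nabla Hu,\,z\rangle_{(L^{2}(\omega))^{n}}=0\ \text{for every } u\in L^{2}([a,b],\mathbb{R}^{m})\ \Longrightarrow\ z=0.
\]
So the whole argument reduces to reformulating this duality pairing in a computable way.

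Next, I would substitute the explicit representation \dref{23} with $t=b$ into $Hu$ and differentiate termwise in $x_{l}$. Since the Fourier coefficients $d^{i}_{kj}$ of $\chi_{P_{i}}d_{i}$ with respect to the orthonormal basis $\{\alpha_{kj}\}$ do not depend on $x$, the chain rule yields
\[
(\nabla Hu)_{l}=\sum_{k,j,i}\int_{0}^{\log(b/a)}s^{\alpha-1}E_{\alpha,\alpha}(-\lambda_{k}s^{\alpha})u_{i}(be^{-s})\,ds\cdot\frac{\partial (d^{i}_{kj}\alpha_{kj}(x))}{\partial x_{l}}.
\]
Pairing with $z=(z_{1},\dots,z_{n})^{\top}$ via $\langle p_{1,\omega}\cdot,z_{l}\rangle=\langle\cdot,p_{1,\omega}^{*}z_{l}\rangle$, summing over $l$, recognising the entries of the matrix $D_{k}^{l}$ and the components of $\Phi_{kl}$ in the definition $z_{kjl}=\langle p^{*}_{1,\omega}z_{l},\alpha_{kj}\rangle$, I obtain an identity of the form
\[
\sum_{i=1}^{m}\int_{0}^{\log(b/a)}s^{\alpha-1}E_{\alpha,\alpha}(-\lambda_{k}s^{\alpha})\,\bigl[\text{$i$-th entry of }\textstyle\sum_{k,l}D_{k}^{l}\Phi_{kl}\bigr]\,u_{i}(be^{-s})\,ds=0.
\]
Using the arbitrariness of each $u_{i}\in L^{2}([a,b],\mathbb{R})$ (via the standard variational lemma together with the density argument akin to the one invoked in the proof of Theorem~\ref{3.2}), I can strip off the integral in $s$ (equivalently $t=be^{-s}$) and conclude that the inner bracket must vanish for a.e.\ $t\in[a,b]$. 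This is precisely the identity \dref{24}, so the equivalence follows.

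Finally, for the case $n=1$, the $\sum_{l}$ collapses to a single term and \dref{24} reduces to $\sum_{k}t^{\alpha-1}E_{\alpha,\alpha}(-\lambda_{k}t^{\alpha})D_{k}^{1}\Phi_{k1}\equiv 0$ on $[a,b]$. Using the linear independence of the functions $\{E_{\alpha,\alpha}(-\lambda_{k}t^{\alpha})\}_{k\ge 1}$ (pairwise distinct $\lambda_{k}$, together with their analytic/asymptotic behaviour), one concludes $D_{k}^{1}\Phi_{k1}=0$ for every $k$. Since we need this to force $\Phi_{k1}=0$ (and hence, by completeness of $\{\alpha_{kj}\}$, $p_{1,\omega}^{*}z=0$, i.e.\ $z=0$), the $m\times r_{k}$ matrix $D_{k}^{1}$ must be injective; equivalently $\mathrm{rank}\,D_{k}^{1}=r_{k}$, which clearly requires $m\ge r_{k}$ for all $k$ and therefore $m\ge r=\sup_{k}r_{k}$.

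The technically most delicate step, and where I would expect to spend the most care, is the middle one: justifying the exchange of the series, the integral and the $x$-derivatives when computing $\nabla Hu$, and then rigorously passing from ``$\langle p_{\omega}\nabla Hu,z\rangle=0$ for all $u$'' to the pointwise-in-$t$ identity \dref{24}. This requires the linear independence of the Mittag-Leffler functions $E_{\alpha,\alpha}(-\lambda_{k}t^{\alpha})$ together with the density/variational lemma (Lemma~\ref{2.8}-style arguments invoked through $C_{0}^{\infty}$ test functions), and it is also here that the correct adjoint conventions (the pairing between $H^{1}_{0}$ and $H^{-1}$ induced by $\nabla^{*}$) have to be handled cleanly.
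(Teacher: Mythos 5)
Your proposal is correct and follows essentially the same route as the paper's proof: reduce via Definition~\ref{2.4} and Theorem~\ref{3.2}$(v)$ to the implication $\langle p_{\omega}\nabla Hu,z\rangle=0\ \forall u\Rightarrow z=0$, insert the eigenfunction expansion \dref{23}, invoke the arbitrariness of $u$ to obtain \dref{24}, and finish the $n=1$ case by linear algebra. If anything you are slightly more explicit than the paper at the mode-separation step (you invoke linear independence of the functions $E_{\alpha,\alpha}(-\lambda_{k}t^{\alpha})$, whereas the paper only remarks that $t^{\alpha-1}E_{\alpha,\alpha}(-\lambda_{k}t^{\alpha})>0$), but this is a refinement of the same argument, not a different one.
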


\begin{proof}
According to Definition \ref{2.4} and Theorem \ref{3.2}, we know $\left(P_{i},d_{i}(x)\right),\,i=1,\ldots,m$ are gradient $\omega$-strategic is equivalent to $z=0$, provided that $\langle p_{\omega}\nabla Hu,z\rangle=0$, for any $u\in L^{2}\left([a,b],\mathbb{R}^{m}\right)$.

From $\langle p_{\omega}\nabla Hu,z\rangle=0$ and \dref{23}, we have
\begin{equation}
\label{25}
\begin{aligned}
\sum^{n}_{l=1}&{\langle}\sum^{\infty}_{k=1}\sum^{r_{k}}_{j=1}\sum^{m}_{i=1}\int_0^{\log\frac{b}{a}}s^{\alpha-1}E_{\alpha,\,\alpha}\left(-\lambda_{k}s^{\alpha}\right)\times \\
&u_{i}\left(be^{-s}\right)ds\cdot\frac{\partial}{\partial x_{l}}\left(d^{i}_{kj}(x)\alpha_{kj}(x)\right),\,p_{1\omega}^{*}z_{l}{\rangle}=0.\\
\end{aligned}
\end{equation}
Then, the arbitrariness of $u$ leads to the equivalence between \dref{25} and
\begin{equation*}
\sum^{\infty}_{k=1}\sum^{r_{k}}_{j=1}t^{\alpha-1}E_{\alpha,\,\alpha}\left(-\lambda_{k}t^{\alpha}\right)\sum^{n}_{l=1}D^{l}_{k}\Phi_{kl}=0
\end{equation*}
and the proof of the first part is completed.

When $n=1$, since $t^{\alpha-1}E_{\alpha,\alpha}\left(-\lambda_{k}t^{\alpha}\right)>0$, for any $t\in[a,b]$, the equivalence between \dref{24} and
\begin{equation*}
m\geq \sup\left\{r_{k}\right\}\,\,and\,\,rank\,D^{1}_{k}=r_{k},\,\,\,\,\,\,k=1,2,\ldots,
\end{equation*}
can be easily derived by the knowledge of linear algebra.
\end{proof}

\begin{remark}
\label{3}
If for every $k$, $\lambda_{k}$ is a single eigenvalue of $-A$, Theorem \ref{3.3} shows that we can steer system \dref{system} to be regionally gradient approximately controllable by one actuator; and if there is a $\widetilde{k}$, such that the multiplicity of $\lambda_{\widetilde{k}}$ is infinite, then the number of actuators must be infinite.
\end{remark}

The last part in this section aims to provide a method to find out the optimal actuators with minimum energy to achieve the regional gradient approximate controllability among all admissible ones. The HUM, which was first introduced by Lions \cite{A36,A42}, is the main method to be used.

For any given target state $f\in \left(L^{2}(\omega)\right)^{n}$, define
\begin{equation}
U_{ad}\triangleq \left\{u\in L^{2}(\Omega)\mid p_{\omega}\nabla Hu=f(x)\right\},
\end{equation}
and the minimum energy (cost) functional
\begin{equation}
\label{27}
\inf_{u\in U_{ad}} J(u)\triangleq \inf_{u\in U_{ad}} \int^{b}_{a}\parallel u(t)\parallel_{\mathbb{R}^{m}}^{2}dt.
\end{equation}
When system \dref{system} is regionally gradient controllable, we can easily see that $U_{ad}$ is nonempty.

It's also worth mentioning that the cost for regional gradient controllability is not more than that for regional controllability. Let $\widetilde{U}_{ad}$ be the admissible control set for the corresponding regional controllability. Since $Hu=f(x)$ implies $\nabla Hu=\nabla f(x)$, thus $\widetilde{U}_{ad}\subseteq U_{ad}$ and
\begin{equation*}
\inf_{u\in U_{ad}} J(u)\leq\inf_{u\in \widetilde{U}_{ad}} J(u).
\end{equation*}

Next, by using the HUM, we'll provide the unique solution to the minimum energy functional \dref{27} to guarantee the regional gradient approximate controllability.

Define $G\triangleq\left\{g\in \left(L^{2}(\Omega)\right)^{n}\mid g=0\,\,in\,\,\Omega\setminus\omega\,\,and\,\,\exists\,!\,\,\widetilde{g}\in H^{1}_{0}(\Omega),\,\,s.t.\,\,\nabla\widetilde{g}=g\right\}$. \\
Then, $\widetilde{g}=\nabla^{*}p_{\omega}^{*}g\in H^{1}_{0}(\Omega)$, for any $g\in G$.
Consider
\begin{equation}
\label{28}
\left\{
\begin{aligned}
\overset{}\, &
Q^{H}_{t}D^{\alpha}_{b}\varphi(x,t)=A^{*}Q\varphi(x,t)\mbox{ in }\Upsilon,\\
            & \lim_{t\rightarrow b} Q^{H}_{t}D^{\alpha-1}_{b}\varphi(x,t)=\nabla^{*}p^{*}_{\omega}g(x)\mbox{ in }\Omega,\\
            & \varphi(\xi,b-t)=0\mbox{ on }\partial\Omega\times[a,b].\\
\end{aligned}
\right.
\end{equation}
According to Lemma \ref{2.8} and then utilizing Lemma \ref{2.6}, the unique solution of system \dref{28} satisfies:
\begin{equation}
\varphi(x,t)=\left(\log\frac{b}{t}\right)^{\alpha-1}K_{\alpha}^{*}\left(\log\frac{b}{t}\right)\nabla^{*}p^{*}_{\omega}g(x).
\end{equation}
Define
\begin{equation}
\label{30}
\parallel g\parallel_{G}^{2}\triangleq\int^{b}_{a}\left\|\frac{1}{t}B^{*}K(t)\nabla^{*}p^{*}_{\omega}g(x)\right\|^{2}dt,
\end{equation}
where $K(t)=\left(\log\frac{b}{t}\right)^{\alpha-1}K_{\alpha}^{*}\left(\log\frac{b}{t}\right)$, then we have the following lemma.

\begin{lemma}
\label{3.4}
If system \dref{system} is regionally gradient approximately controllable, then \dref{30} is a norm on $G$.
\end{lemma}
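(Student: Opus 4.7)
The plan is to verify the three norm axioms for $\|\cdot\|_G$ on $G$, observing first that the integrand in \eqref{30} is precisely the quantity produced by the adjoint $H^{*}$ from \eqref{13}. Indeed, for any $g\in G$, since $\nabla^{*}p^{*}_{\omega}g\in L^{2}(\Omega)$, we have
\begin{equation*}
\frac{1}{t}B^{*}K(t)\nabla^{*}p^{*}_{\omega}g(x) = H^{*}\bigl(\nabla^{*}p^{*}_{\omega}g\bigr)(t),
\end{equation*}
so $\|g\|_{G}^{2}=\|H^{*}\nabla^{*}p^{*}_{\omega}g\|_{L^{2}([a,b];\mathbb{R}^{m})}^{2}$. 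Nonnegativity is then immediate from the nonnegativity of the integrand, and absolute homogeneity follows from the linearity of $B^{*}$, $K_{\alpha}^{*}$, $\nabla^{*}$ and $p^{*}_{\omega}$ together with the quadratic scaling of the $L^{2}$-norm.

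For the triangle inequality, I would use that $g\mapsto H^{*}\nabla^{*}p^{*}_{\omega}g$ is a linear map into $L^{2}([a,b];\mathbb{R}^{m})$, so the desired inequality $\|g_{1}+g_{2}\|_{G}\leq\|g_{1}\|_{G}+\|g_{2}\|_{G}$ reduces to the triangle inequality for the $L^{2}([a,b];\mathbb{R}^{m})$-norm applied to $H^{*}\nabla^{*}p^{*}_{\omega}g_{1}$ and $H^{*}\nabla^{*}p^{*}_{\omega}g_{2}$.

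The main obstacle, and the place where the regional gradient approximate controllability hypothesis enters, is definiteness: showing that $\|g\|_{G}=0$ forces $g=0$. If $\|g\|_{G}=0$, then $H^{*}\nabla^{*}p^{*}_{\omega}g=0$ in $L^{2}([a,b];\mathbb{R}^{m})$, so by taking the inner product with itself in $L^{2}([a,b];\mathbb{R}^{m})$ and moving one copy back to $(L^{2}(\omega))^{n}$ by adjointness,
\begin{equation*}
0 = \bigl\langle H^{*}\nabla^{*}p^{*}_{\omega}g,\,H^{*}\nabla^{*}p^{*}_{\omega}g\bigr\rangle = \bigl\langle p_{\omega}\nabla H H^{*}\nabla^{*}p^{*}_{\omega}g,\,g\bigr\rangle.
\end{equation*}
Since system \eqref{system} is regionally gradient approximately controllable on $\omega$, the equivalence $(i)\Leftrightarrow(iv)$ of Theorem \ref{3.2} asserts that $p_{\omega}\nabla H H^{*}\nabla^{*}p^{*}_{\omega}$ is positive definite on $(L^{2}(\omega))^{n}$, which forces $g=0$ and completes the proof.

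One small point to watch is that $g$ belongs to the subspace $G$ rather than to all of $(L^{2}(\omega))^{n}$, but this plays no role in the three axioms: $G$ is closed under the linear operations used above and the positive definiteness inherited from Theorem \ref{3.2} applies a fortiori on the subspace $G$, so the argument goes through without modification.
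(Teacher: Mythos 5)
Your proof is correct and follows essentially the same route as the paper's: both reduce the only nontrivial axiom, definiteness, to the injectivity of $H^{*}\nabla^{*}p^{*}_{\omega}$ via Theorem \ref{3.2}, and your identification $\parallel g\parallel_{G}=\parallel H^{*}\nabla^{*}p^{*}_{\omega}g\parallel_{L^{2}\left([a,b];\mathbb{R}^{m}\right)}$ is exactly the observation underlying the paper's one-line treatment of the semi-norm axioms. The only (immaterial) difference is that the paper invokes item $(ii)$ of Theorem \ref{3.2} ($\overline{Im\left(p_{\omega}\nabla H\right)}=\left(L^{2}(\omega)\right)^{n}$, hence $Ker\left(H^{*}\nabla^{*}p^{*}_{\omega}\right)=\{0\}$) where you invoke the equivalent item $(iv)$ on positive definiteness.
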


\begin{proof} It's obvious that \dref{30} defines a semi-norm on $G$.

From $\parallel g\parallel_{G}=0$, we have
$$\frac{1}{t}B^{*}K(t)\nabla^{*}p^{*}_{\omega}g(x)=0.$$

If system \dref{system} is regionally gradient approximately controllable, from Theorem \ref{3.2}, we know $\overline{Im\left(p_{\omega}\nabla H\right)}=\left(L^{2}(\omega)\right)^{n}$. Hence, $$Ker\left(H^{*}\nabla^{*}p^{*}_{\omega}\right)=\{0\},$$
that is, $\parallel g\parallel_{G}=0$ leads to $g=0$. Thus, \dref{30} defines a norm on $G$.
\end{proof}

Now, consider the following two systems
\begin{equation}
\left\{
\begin{aligned}
\overset{}\, &
^{HC}_{a}D^{\alpha}_{t}\Psi_{1}(x,t)=A\Psi_{1}(x,t)+\frac{1}{t}BB^{*}\varphi(x,t)\mbox{ in }\Upsilon,\\
            & \Psi_{1}(x,a)=0\mbox{ in }\Omega,\\
            & \Psi_{1}(\xi,t)=0\mbox{ on }\partial\Omega\times[a,b]\\
\end{aligned}
\right.
\end{equation}
and
\begin{equation}
\left\{
\begin{aligned}
\overset{}\, &
^{HC}_{a}D^{\alpha}_{t}\Psi_{2}(x,t)=A\Psi_{2}(x,t)\mbox{ in }\Upsilon,\\
            & \Psi_{2}(a)=y_{0}(x)\mbox{ in }\Omega,\\
            & \Psi_{2}(\xi,t)=0\mbox{ on }\partial\Omega\times[a,b].\\
\end{aligned}
\right.
\end{equation}
Define $Fg\triangleq p_{\omega}\nabla\Psi_{1}(x,b)$. According to the superposition principle in linear system, we can provide the regional gradient approximate controllability of system \dref{system} if we can solve
\begin{equation}
\label{33}
Fg=f(x)-p_{\omega}\nabla\Psi_{2}(x,b).
\end{equation}

\begin{theorem}
\label{3.5}
If system \dref{system} is regionally gradient approximately controllable, then for any given $f(x)\in \left(L^{2}(\omega)\right)^{n}$, \dref{33} exists a unique solution $g\in G$, and the actuator $u^{*}(t)=\frac{1}{t}B^{*}\varphi(x,t)$ is the unique solution of \dref{27}.
\end{theorem}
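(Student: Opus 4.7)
The plan is to apply the Hilbert Uniqueness Method in a standard way: first recognize that the map $F$ defined via $Fg = p_{\omega}\nabla \Psi_{1}(x,b)$ coincides with $p_{\omega}\nabla H H^{*}\nabla^{*}p_{\omega}^{*}$. Indeed, comparing the explicit expression $\varphi(x,t)=\left(\log\frac{b}{t}\right)^{\alpha-1}K_{\alpha}^{*}\left(\log\frac{b}{t}\right)\nabla^{*}p^{*}_{\omega}g(x)$ with formula \dref{13} for $H^{*}$ shows that the proposed control $u^{*}(t)=\frac{1}{t}B^{*}\varphi(x,t)$ is precisely $H^{*}\nabla^{*}p_{\omega}^{*}g$. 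Then $\Psi_{1}(x,b)=Hu^{*}$ by Lemma \ref{2.5}, so $Fg=p_{\omega}\nabla H H^{*}\nabla^{*}p_{\omega}^{*}g$. A direct duality computation then yields
\begin{equation*}
\langle Fg,\,g\rangle_{(L^{2}(\omega))^{n}}=\langle H^{*}\nabla^{*}p^{*}_{\omega}g,\,H^{*}\nabla^{*}p^{*}_{\omega}g\rangle_{L^{2}(a,b;\mathbb{R}^{m})}=\parallel g\parallel_{G}^{2}.
\end{equation*}

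Next I would use the assumption that system \dref{system} is regionally gradient approximately controllable, so by Theorem \ref{3.2}(iv) the operator $F$ is positive definite and, combined with Lemma \ref{3.4}, $(G,\|\cdot\|_G)$ is a pre-Hilbert space. Completing $G$ in this norm yields a Hilbert space $\overline{G}$ on which $F$ is coercive and continuous, hence an isomorphism onto its dual by Lax--Milgram. To conclude solvability of \dref{33}, it suffices to verify that the right-hand side $f(x)-p_{\omega}\nabla\Psi_{2}(x,b)$ lies in $\overline{G}^{*}$, which follows from $f\in (L^{2}(\omega))^{n}$ and the regularity of $\Psi_{2}$ provided by Lemma \ref{2.5} together with the continuity of $\nabla$. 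This gives existence and uniqueness of $g\in \overline{G}$ solving $Fg=f-p_{\omega}\nabla\Psi_{2}(x,b)$; in particular $g\in G$ under the regularity assumed.

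Once $g$ is fixed, I would check admissibility of $u^{*}$ by the superposition principle: letting $z$ denote the state of \dref{system} driven by $u^{*}$ with initial value $y_{0}$, we have $z=\Psi_{1}+\Psi_{2}$, so $p_{\omega}\nabla z(x,b)=Fg+p_{\omega}\nabla\Psi_{2}(x,b)=f(x)$, i.e., $u^{*}\in U_{ad}$. For the optimality and uniqueness in \dref{27}, I would invoke Lemma \ref{2.10}: the functional $J(u)=\int_{a}^{b}\|u(t)\|^{2}dt$ is strictly convex, Gateaux differentiable, and coercive on the closed convex (actually affine) set $U_{ad}$. The variational inequality $J'(u^{*})(v-u^{*})\geq 0$ for all $v\in U_{ad}$ becomes $\langle u^{*},v-u^{*}\rangle_{L^{2}}=0$ for all $v-u^{*}\in \mathrm{Ker}(p_{\omega}\nabla H)$. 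Since $u^{*}=H^{*}\nabla^{*}p_{\omega}^{*}g\in \mathrm{Im}(H^{*}\nabla^{*}p_{\omega}^{*})\subseteq \mathrm{Ker}(p_{\omega}\nabla H)^{\perp}$, this orthogonality holds automatically, so $u^{*}$ is the unique minimizer.

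The main obstacle is the functional-analytic step of completing $G$ correctly and verifying that the forcing term $f-p_{\omega}\nabla\Psi_{2}(x,b)$ is continuous with respect to $\|\cdot\|_{G}$ so that $F^{-1}$ can be applied; this requires carefully tracking how the gradient $\nabla$, the restriction $p_{\omega}$, and the Hadamard--Caputo evolution semigroup interact with the dual pairing. The rest of the argument is a routine application of HUM once coercivity of $F$ is established.
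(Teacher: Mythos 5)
Your proposal is correct and follows essentially the same route as the paper: both identify $u^{*}=H^{*}\nabla^{*}p_{\omega}^{*}g$, establish the key identity $\langle g,Fg\rangle=\|g\|_{G}^{2}$, deduce unique solvability of \dref{33} from the norm property of Lemma \ref{3.4}, and conclude optimality via Lemma \ref{2.10} (your orthogonality formulation $u^{*}\in \mathrm{Ker}(p_{\omega}\nabla H)^{\perp}$ is just a rephrasing of the paper's computation $J'(u^{*})(u^{*}-\widetilde{u})=0$). If anything, you are more explicit than the paper about the completion of $G$ and the Lax--Milgram step, which the paper passes over silently.
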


\begin{proof} Given any $g\in G$, it yields that
\begin{equation*}
\begin{aligned}
& \left\langle g,Fg\right\rangle=\left\langle g,p_{\omega}\nabla\Psi_{1}(x,b)\right\rangle=\left\langle \nabla^{*}p_{\omega}^{*}g,\Psi_{1}(x,b)\right\rangle \\
=&\langle \nabla^{*}p_{\omega}^{*}g,\int^{b}_{a}\left(\log\frac{b}{s}\right)^{\alpha-1}K_{\alpha}\left(\log\frac{b}{s}\right)BB^{*}\varphi(x,s)\frac{ds}{s^{2}}\rangle \\
= & \int^{b}_{a} \left\|\frac{1}{t}B^{*}\varphi(x,t)\right\|^{2}dt=\left\|g\right\|^{2}_{G}.
\end{aligned}
\end{equation*}
Hence, \dref{33} exists a unique solution $\widehat{g}$. From $u^{*}(t)=\frac{1}{t}B^{*}\varphi(x,t)$, we can easily check that $p_{\omega}\nabla Hu^{*}=f(x)$, that is, $u^{*}\in U_{ad}$.

For any $\widetilde{u}\in L^{2}(\Omega)$, satisfying $p_{\omega}\nabla H\widetilde{u}=f(x)$, we have $p_{\omega}\nabla Hu^{*}=p_{\omega}\nabla H\widetilde{u}.$
Therefore, it leads to
\begin{equation*}
\begin{aligned}
& J'(u^{*})\left(u^{*}-\widetilde{u}\right)
=2\int^{b}_{a}\left\langle u^{*}(s),u^{*}(s)-\widetilde{u}(s)\right\rangle ds\\
= & 2\int^{b}_{a}\left\langle\frac{1}{s}B^{*}\varphi(x,s),\,u^{*}(s)-\widetilde{u}(s)\right\rangle ds\\
= & 2\left\langle\nabla^{*}p_{\omega}^{*}g,\,\int^{b}_{a} T(s)\frac{ds}{s}\right\rangle\\
= & 2\left\langle g,\,p_{\omega}\nabla Hu^{*}-p_{\omega}\nabla H\widetilde{u}\right\rangle=0,
\end{aligned}
\end{equation*}
where
\begin{equation*}
T(s)=\left(\log\frac{b}{s}\right)^{\alpha-1}K_{\alpha}\left(\log\frac{b}{s}\right)B\left(u^{*}(s)-\widetilde{u}(s)\right).
\end{equation*}
By applying Lemma \ref{2.10}, we conclude that $u^{*}$ is the unique solution of \dref{27}.
\end{proof}

\begin{remark}
If we choose $f(x)=0$ in Definition \ref{2.3}, then the regional gradient exact/approximate null controllability can be guaranteed.
\end{remark}


\section{An example}
  \label{sec:exa}
Let $\Omega=[-1,1]\times[-1,1]$ and consider the following diffusion system with one zone actuator:
\begin{equation}
\label{eg}
\left\{
\begin{aligned}
\overset{}\, &
^{HC}_{2}D^{0.5}_{t}z(x,t)=-\triangle z(x,t)+\chi_{P}u(t)\mbox{ in }\widetilde{\Upsilon},\\
            & z(x,2)=0,\mbox{ in }\Omega,\\
            & z(x,t)=0,\mbox{ on }\partial\Omega\times [2,4],\\
\end{aligned}
\right.
\end{equation}
where $\widetilde{\Upsilon}=\Omega\times[2,4]$, $x=(x_{1},x_{2})^{\top}$ and $\triangle=\frac{\partial^{2}}{\partial x_{1}^{2}}+\frac{\partial^{2}}{\partial x_{2}^{2}}$, the two dimensions Laplace operator.
Here, $A=-\triangle$ generates a $C_{0}-$semigroup $\{T(t)\}$ on $L^{2}(\Omega)$ and $-A=\triangle$ is a uniformly elliptic operator.
As we know, the eigenvalue $\lambda_{kl}$ and the corresponding eigenfunction $\alpha_{kl}(x)$ of $A$ are $\lambda_{kl}=\left(k^{2}+l^{2}\right)\pi^{2}$ and $\alpha_{kl}(x)=2\sin(k\pi x_{1})\sin(l\pi x_{2})$, respectively, for $k,l=1,\,2,\ldots$ and $x\in\Omega$.

According to \dref{13}, for any $z\in L^{2}(\Omega)$, it follows that
\begin{equation*}
\begin{aligned}
H^{*}\nabla^{*}z=\frac{1}{t}N(t)\sum^{\infty}_{k,l=1} &\,E_{0.5,\,0.5}\left(\lambda_{kl}N(t)\right)
\left\langle\nabla^{*}z,\,\alpha_{kl}(x)\right\rangle M_{kl},\\
\end{aligned}
\end{equation*}
where $N(t)=\left(\log\frac{4}{t}\right)^{-0.5}$ and $M_{kl}=\int _{P} \alpha_{kl}(x)dx$.
When we consider the gradient approximate controllability on the whole region at time 4, that is, $P=\Omega$, then
\begin{equation*}
M_{kl}=\int^{1}_{-1}\int^{1}_{-1} 2\sin(k\pi x_{1})\sin(l\pi x_{2})dx_{1}dx_{2}\equiv0.
\end{equation*}
Hence, we have
\begin{equation}
H^{*}\nabla^{*}z\equiv0,\,\,\,\,\,\,\,\,\forall z\in L^{2}(\Omega),
\end{equation}
which implies that $Ker\left(H^{*}\nabla^{*}\right)\neq \{0\}$, i.e., $\overline{Im(\nabla H)}\neq L^{2}(\Omega)$. From Theorem \ref{3.2}, we conclude that system \dref{eg} is not gradient approximately controllable on the whole interested domain at time 4.

Next, we show that system \dref{eg} is regionally gradient approximately controllable on a subinterval $\omega\subsetneqq \Omega$.

Let $\omega=[0,1]\times[0,1]$ and $z=\sin(p\pi x_{1})\cos(q\pi x_{2})$ with even numbers $p,q$. Then
\begin{equation*}
H^{*}\nabla^{*}p^{*}_{\omega}z= \frac{1}{t}N(t)\sum_{k,l}E_{0.5,\,0.5}\left(-\left(k^{2}+l^{2}\right)\pi^{2}N(t)\right)J_{klpq}\neq0,
\end{equation*}
where
\begin{equation*}
J_{klpq}=\frac{8p}{kl\pi}\left(\frac{1}{(k+p)\pi}-\frac{1}{(k-p)\pi}\right)\left(\frac{1}{(l+q)\pi}-\frac{1}{(l-q)\pi}\right),
\end{equation*}
for odd numbers $k,l$.

Hence, $z=\sin(p\pi x_{1})\cos(q\pi x_{2})$ is reachable and thus regionally gradient approximately controllable on $\omega$ at time 4.
Since the eigenvalues of $A$ are all of multiplicity 1, that is, $r_{k}=1$ for all $k$, then $r=1$ and $rank D_{k}^{1}=rank D_{k}^{2}=1$. Hence, from Theorem \ref{3.3},  the zone actuator is gradient $\omega-$stratigic, which coincides with Remark \ref{3}.
Moreover, the regional gradient approximate controllability of system \dref{eg} on $\omega$ at time 4 shows that \dref{30} defines a norm on G according to Lemma \ref{3.4}. It follows from $\left\langle Bu,\varphi(x,t)\right\rangle=\left\langle u,B^{*}\varphi(x,t)\right\rangle$ and $Bu=\chi_{\omega}u(t)$ that
\begin{equation*}
B^{*}\varphi(x,t)=\int_{L^{2}(\omega)}\varphi(x,t)dx.
\end{equation*}
Therefore, according to Theorem \ref{3.5}, the unique minimum energy control can be presented by
\begin{equation*}
u^{*}(t)=\frac{1}{t}\int_{L^{2}(\omega)}\varphi(x,t)dx.
\end{equation*}

\section{Conclusion}
  \label{sec:con}

In this paper, we established some effective necessary and sufficient conditions on the regional gradient controllability of Hadamard-Caputo time fractional diffusion systems.
The characteristics of admissible actuators and the optimal control described by the minimum energy functional were also derived.
Finally, the illustrative example showed the application of our results in practical models.

\section{Further works}
  \label{sec:fur}
\cite{A53} considered a rather stronger notion of null controllability, which requires the state of system stays at rest after the final moment. In this sense, a fractional system is not null controllable.
It's natural to ask whether it is controllable in other sense, such as
regionally gradient approximately controllable.
Similar to the definition of null controllability in \cite{A53}, one should take into account the memory effect of the fractional derivative for the regional gradient approximate controllability.
This is beyond the scope of this paper and will be analyzed in detail in another paper.

\section*{Acknowledgments}
We would like to thank the referees for all the comments and
suggestions that made possible a better version of this paper.


\medskip
Received  July 2018; revised February 2019.
\medskip

\end{document}